
\documentclass[conference, letterpaper,10 pt]{IEEEtran}
\IEEEoverridecommandlockouts

\usepackage{cite}

\ifCLASSINFOpdf
   \usepackage[pdftex]{graphicx}
   \graphicspath{{./}}
   \DeclareGraphicsExtensions{.pdf,.jpeg,.png,.jpg,}
\else
 
\fi


\usepackage{tikz}
\usetikzlibrary{fit}
\usetikzlibrary{arrows,calc,shapes,decorations}
\usepackage{relsize}
\usetikzlibrary{arrows.meta,calc,decorations.markings,math}
\usetikzlibrary{spy}

\usepackage{diagbox}
\usepackage{booktabs}
\usepackage{makecell}
\usepackage{rotating}
\usepackage{multirow}

%
\usepackage[cmex10]{amsmath}

\interdisplaylinepenalty=2500

\usepackage{amsfonts,amssymb}

\usepackage{amsthm}

\newtheorem{theorem}{Theorem}

\newtheorem{corollary}{\textbf{Corollary}}[theorem]
\newtheorem{lemma}{\textbf{Lemma}}

\theoremstyle{definition}

\newtheorem{remark}{\textbf{Remark}}
\newcommand{\norm}[1]{\left\lVert#1\right\rVert} 

\usepackage{accents}

\usepackage{colortbl}

\usepackage[inline]{enumitem}

\usepackage{array}
\newcolumntype{C}[1]{>{\centering\bfseries}m{#1}}
\usepackage{tabularx}

\allowdisplaybreaks

\ifCLASSOPTIONcompsoc
  \usepackage[caption=false,font=normalsize,labelfont=sf,textfont=sf]{subfig}
\else
  \usepackage[caption=false,font=footnotesize]{subfig}
\fi


\begin{document}
\title{All-Against-One Linear-Quadratic\\ Differential Games}

\author{
	\thanks{The authors are with the Department of Electrical and Computer Engineering, University of Central Florida, Orlando, FL. Emails: shahriar@ece.ucf.edu, simaan@ucf.edu, qu@ucf.edu.}
	\thanks{This work was supported in part by the US National Science Foundation under grants ECCS-1308928.}
\IEEEauthorblockN{Shahriar~Talebi,~\textit{Student~Member,~IEEE,}~Marwan~A.~Simaan,~\textit{Life~Fellow,~IEEE,} ~Zhihua~Qu,~\textit{Fellow,~IEEE,}
}
}

\maketitle

\begin{abstract}
All-Against-One (AAO) games are a special class of multi-player games where all players except one are in direct conflict with the remaining player.  In the case of Linear Quadratic Differential (LQD) games, the AAO structure can be used to describe a situation where all players except one are trying to regulate the state of the system (drive it towards the origin) while the opposing player is trying to de-regulate the state (drive it away from the origin). Similar to the standard LQD games, the closed-loop Nash strategies in the AAO LQD games are expressed in terms of the solutions of a set of coupled matrix Riccati differential equations. However, conditions for existence of a solution to these equations are different and more challenging in the AAO case. In this paper, we derive conditions for existence, definiteness, and uniqueness of a solution to these equations as well as conditions for boundedness of the resulting state trajectory to ensure that the opposing player fails in accomplishing its objective. Finally, we consider two options for designing Nash strategies for the players and we illustrate the results with an example of 3-against-1 pursuit-evasion differential game.
\end{abstract}

\section{Introduction}
Many large enterprises consist of a large number of interacting subsystems.  These systems often operate optimally when all subsystems have a harmonious non-conflicting relationship among themselves.  When one subsystem decides to operate in a manner that is not consistent with the others, the operation of the entire enterprise suffers resulting in an adversarial environment that affects not only the behavior of the dissenting subsystem but also possibly the other conforming subsystems as well.  The disruption caused by one dissenting subsystem may cause the remaining subsystems to act as a unified team or it may result in the entire enterprise disintegrating and behaving in a non-cooperative manner within itself.  These types of systems are best analyzed under the general framework of game theory and more specifically using a new structure, which we refer to as All-Against-One (AAO) games.

An example of AAO games, which has recently been considered in the literature, is the multi-pursuer one-evader pursuit evasion game \cite{talebi2017cooperative}.  In this game, a group of pursuers are trying to catch one evader who is trying to escape.  The solution of these types of games involves the development of movement strategies for the pursuers and evader that will conclude with either the evader escaping or being captured.      Except for this multi-pursuers one-evader problem, game theory has mainly been concerned with independent non-cooperative players with typically different objectives \cite{isaacs1965differential, ho1965differential, starr1969nonzero, foley1974class, basar1999dynamic, engwerda2005lq, lambertini2018differential}. The Nash equilibrium has been a very useful concept in defining strategies for such games \cite{nash1951non}.  The AAO structure, however, provides a unique configuration that allows for a more specific analysis of problems where a group of players, all having different but similar objectives, act against one opposing player. 

In this paper, we focus on All-Against-One (AAO) Linear Quadratic Differential (LQD) game.  Non-zero sum LQD games have been studied for decades, beginning with the seminal paper by Starr and Ho \cite{starr1969nonzero}.  An M-player LQD game is described by the system dynamics:
\begin{equation}\label{eqn:sys-gen}
 \dot{x} = A x + \sum_{j=1}^M{B_j u_j}, \quad x(t_0) =x_0;
\end{equation}
and cost functions
\begin{equation}\label{eqn:cost-gen-p}
 J_i = \frac{1}{2} x^\intercal(t_f) S_{if} x(t_f) + \frac{1}{2} \int_{t_0}^{t_f} \big[x^\intercal Q_i x
+ u_i^\intercal R_i u_i \big]dt
\end{equation}
for $i=1,2,...,M$, where $x$ is the state vector and $u_1$ through $u_M$ are the players' control vectors. Matrices $A$, $B_i$, $Q_i$ and $R_i$ are all bounded and of proper dimensions and $S_{if} \geq 0$ and $Q_i \geq 0$ are symmetric positive semi-definite matrices and $R_i > 0$ is symmetric positive definite for $i = 1, 2, ..., M$.  For simplicity of notation and derivations, we have not included the input cross-coupling terms in the cost functions.

The AAO LQD game is described by the same dynamics as in (\ref{eqn:sys-gen}) and cost functions as in (\ref{eqn:cost-gen-p}) except that now $S_{1f} < 0$ and $Q_1 < 0$ are symmetric negative definite matrices while $S_{if} \geq 0$ and $Q_i \geq 0$ for $i = 2, 3, ..., M$ remain as before.  This formulation puts players $2,\dots,M$ directly in conflict with player 1.  That is, while players 2 through $M$ are trying to regulate the state of the system by minimizing its deviation from the origin as in the standard LQD game, player 1 is now trying to de-regulate (or de-stabilize) the system by maximizing the deviation of the state from the origin.   Because of this, existence results of the closed-loop Nash strategies for players in the standard LQD game \cite{papavassilopoulos1979existence, papavassilopoulos1979uniqueness,  freiling1996global, abou2012matrix} do not apply to the AAO games. 

In this paper, we derive conditions for existence, definiteness, and uniqueness of the closed-loop Nash strategies in AAO LQD games as well as boundedness of the resulting state trajectory. The paper is organized as follows: In section \ref{sec:contrib}, we define the problem and derive new conditions for existence and definiteness of the closed-loop Nash strategies as well as sufficient conditions for the exponential boundedness of the resulting state trajectory. One advantage of the AAO structure is that it provides for an additional option for the group of players to consider forming a cooperative team against the opposing player in an attempt to improve their collective performance.  In section \ref{sec:strategies}, we discuss the Team-Nash strategy for the group of players in designing their strategies against the opposing player as an alternative option to the Nash strategy among all players. In section \ref{sec:example}, we present an illustrative example of three-pursuers one-evader, as a three-against-one game that considers different pursuit-evasion scenarios and shows simulation results for both pursuers' strategies when the evader is using a strategy that yields a Nash equilibrium in each case. Concluding remarks are presented in section \ref{sec:conclusion}.

\subsection{Notation}
The real maximum (minimum) eigenvalue of a symmetric $n \times n$ matrix $Q(t)$ at each instant of time $t$ is denoted by $\lambda_{Q}^{max}(t)$ ($\lambda_{Q}^{min}(t)$). 
The maximal (minimal) eigenvalue of the same matrix $Q(t)$ over the interval $[t_0,t_f]$ is a real constant defined as $\bar{\lambda}_{Q} \triangleq \max_{t \in [t_0,t_f]} \lambda_{Q}^{max}(t)$ ($\underline{\lambda}_{Q} \triangleq \min_{t \in [t_0,t_f]} \lambda_{Q}^{min}(t)$). 
The Euclidean norm of a vector $v$ is denoted by $\norm{v}$. The Frobenius norm of a matrix $S$ is defined as $\norm{S}_F =tr\{SS^\intercal\}$ where $tr\{.\}$ is the trace of a matrix. Function $blkdiag\{Q_1,\dots,Q_m\}$ constructs a larger matrix with diagonal blocks consist of matrices $Q_i$. Matrix $I_n$ is the identity matrix of dimension $n$, vector $\mathbf{1}_n$ denotes a vector of dimension $n$ with all entries equal to 1, vector $\mathbf{e}_{i}$ is the standard basis for $\mathbb{R}^n$ in the $i^{th}$ direction and $\otimes$ indicates the Kronecker product. The set of positive (negative) semi-definite $\mathbb{R}^{n \times n}$ matrices is denote by $\mathbb{R}_+^{n \times n}$ ($\mathbb{R}_-^{n \times n}$) and $\prod$ indicates the Cartesian product of sets.\\
We use the notation $\mathcal{I}_{t_f}^\delta$ to denote a subset of $[t_0,t_f]$ defined as $\mathcal{I}_{t_f}^\delta = [t_f-\delta,t_f] \subseteq [t_0,t_f]$ where $\delta$ is arbitrary and $0 < \delta \leq t_f-t_0$.

\section{Characterization of Closed-loop Nash Strategies in AAO LQD Games \label{sec:contrib}}
It is well known \cite{starr1969nonzero} that the closed-loop Nash strategies for the game described by (\ref{eqn:sys-gen}) and (\ref{eqn:cost-gen-p}) are of the form 
\begin{equation}\label{eqn:opt-input}
u_i^* = -R_i^{-1} B_i^\intercal S_i x
\end{equation}
for $i = 1, 2, ..., M$, where $S_i$'s satisfy the following M-coupled differential Riccati equations:
\begin{multline}\label{eqn:Nriccati}
\dot{S_i} + S_i A + A^\intercal S_i + Q_i + S_i H_i S_i \\
- \sum_{j=1}^M \big(S_i H_j S_j + S_j H_j S_i \big) = 0,
\end{multline}
with boundary conditions $S_i(t_f) = S_{if}$, and $H_i = B_i R_i^{-1} B_i^\intercal$. The resulting system trajectory will satisfy
\begin{equation}\label{eqn:sysdyn}
\dot{x} = \overline{A}x
\end{equation}
where
\begin{equation}\label{eqn:Abar}
\overline{A} \triangleq A -  \sum_{j=1}^M H_{j} S_j. 
\end{equation}
The above characterization (and all the subsequent analysis) related to the AAO games can be easily extended if the input cross-coupling terms were presented in the cost functions (\ref{eqn:cost-gen-p}).

\subsection{Definiteness of AAO Solutions}
For the standard LQD game where $S_{if} \geq 0$ and $Q_i \geq 0$ for $i = 1, 2, ..., M$, it is well known \cite{abou2012matrix} that all matrices satisfying (\ref{eqn:Nriccati}) will be positive semidefinite.  That is, $S_i(t) \geq 0$ for $t \in [t_0,t_f]$ and for $i = 1, 2, ..., M$. Because $S_{1f}<0$ and $Q_1<0$ in the AAO games this result is no longer valid for these games.  The following theorem provides the equivalent result for the AAO LQD formulation.
\begin{theorem}
\label{lemma:Sdef} 
For the AAO game, let $\{S_i(t), i=1,\cdots, M\}$ satisfy (\ref{eqn:Nriccati}) over any interval $\mathcal{I}_{t_f}^\delta \subseteq [t_0,t_f]$, then for all $t \in \mathcal{I}_{t_f}^\delta$ we have $S_1(t) < 0$ and $S_i(t) \geq 0$ for $i = 2, 3, ..., M$.
\end{theorem}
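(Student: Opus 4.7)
The plan is to turn each of the $M$ coupled Riccati equations in (\ref{eqn:Nriccati}) into a Lyapunov-type integral representation by absorbing the coupling terms into a suitable closed-loop matrix, and then to read the sign of $S_i(t)$ directly off the sign of the forcing term. The main subtlety is that the quadratic $S_i H_i S_i$ is always positive semidefinite, so the closed-loop matrix chosen for each player must be such that this term combines with the boundary and running costs in a sign-consistent way.

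For $i = 2,\dots,M$ the natural choice is $\overline{A}$ in (\ref{eqn:Abar}). A direct substitution reduces (\ref{eqn:Nriccati}) to
\begin{equation*}
\dot S_i + S_i \overline{A} + \overline{A}^\intercal S_i + Q_i + S_i H_i S_i = 0.
\end{equation*}
Letting $\Phi(\cdot, t_f)$ be the (invertible) state-transition matrix generated by $\overline{A}$, differentiating $\Phi^\intercal S_i \Phi$ along trajectories and integrating backward from $t_f$ yields
\begin{equation*}
\Phi^\intercal(t,t_f)\, S_i(t)\, \Phi(t,t_f) = S_{if} + \int_t^{t_f} \Phi^\intercal [Q_i + S_i H_i S_i] \Phi \, d\tau.
\end{equation*}
Every term on the right is positive semidefinite under the AAO hypotheses for $i \geq 2$, so $S_i(t) \geq 0$ on $\mathcal{I}_{t_f}^\delta$.

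For $i = 1$ the same rearrangement fails because $S_{1f} < 0$ while $S_1 H_1 S_1 \geq 0$. The fix I would use is to separate the $j = 1$ summand from the coupling term before grouping: the $j = 1$ contribution is $2 S_1 H_1 S_1$, which absorbs the isolated $+S_1 H_1 S_1$ and leaves a net $-S_1 H_1 S_1$. The equation then reads
\begin{equation*}
\dot S_1 + S_1 \tilde A_1 + \tilde A_1^\intercal S_1 + Q_1 - S_1 H_1 S_1 = 0, \quad \tilde A_1 \triangleq A - \sum_{j=2}^M H_j S_j.
\end{equation*}
Using the transition matrix $\tilde\Phi_1$ of $\tilde A_1$ and the same backward integration gives
\begin{equation*}
\tilde\Phi_1^\intercal(t,t_f)\, S_1(t)\, \tilde\Phi_1(t,t_f) = S_{1f} - \int_t^{t_f} \tilde\Phi_1^\intercal [S_1 H_1 S_1 - Q_1] \tilde\Phi_1 \, d\tau.
\end{equation*}
Because $-Q_1 > 0$ and $S_1 H_1 S_1 \geq 0$, the integrand is positive definite, so for every $t \in \mathcal{I}_{t_f}^\delta$ the left side is (weakly) dominated by $S_{1f} < 0$; invertibility of $\tilde\Phi_1$ then forces $S_1(t) < 0$.

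The key step, and the only one that is not routine, is recognizing that the ``right'' change of variables for player~1 is not the common closed-loop matrix $\overline{A}$ but a player-specific one that excludes player~1's own feedback gain. Once that split is made, the desired definiteness conclusions follow from a standard Lyapunov identity together with the sign hypotheses $Q_1, S_{1f} < 0$ and $Q_i, S_{if} \geq 0$ for $i \geq 2$.
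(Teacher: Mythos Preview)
Your proposal is correct. For $i\ge 2$ you do exactly what the paper does: rewrite (\ref{eqn:Nriccati}) with the common closed-loop matrix $\overline{A}$ and read off $S_i(t)\ge 0$ from the Lyapunov integral representation.

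For $i=1$ you take a genuinely different route. The paper argues via the maximum eigenvalue $\lambda_{S_1}^{max}(t)$: it computes $\frac{d}{dt}\lambda_{S_1}^{max}$ (a.e.) and shows that whenever $\lambda_{S_1}^{max}$ is small the derivative is approximately $-v^\intercal Q_1 v>0$, so $\lambda_{S_1}^{max}$ cannot reach zero going backward from $t_f$. This is a barrier-type argument that has to confront the nondifferentiability of eigenvectors and an ``approximately'' step near the barrier. Your approach instead observes that splitting off the $j=1$ summand turns the equation into a Lyapunov equation in the player-specific closed-loop matrix $\tilde A_1=A-\sum_{j\ge 2}H_jS_j$, with forcing term $Q_1-S_1H_1S_1\le Q_1<0$; the integral representation then gives $\tilde\Phi_1^\intercal S_1\tilde\Phi_1\le S_{1f}<0$ directly. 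This is cleaner and avoids the eigenvalue-regularity technicalities entirely, at the cost of requiring one to spot the asymmetric choice of closed-loop matrix for player~1. Both arguments use only the sign hypotheses on $Q_1$, $S_{1f}$ and the existence of the $S_j$ on $\mathcal{I}_{t_f}^\delta$, so neither is more general than the other.
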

\begin{proof}
Let $\lambda_{S_1}^{max}(t)$ and $v(t)$ be the maximum eigenvalue and corresponding unit eigenvector of $S_1(t)$. The matrix $S_1(t)$ is piecewise continuously differentiable and symmetric for $ t \in \mathcal{I}_{t_f}^\delta$ but not necessarily analytic. 
Thus, its eigenvectors (or eigenspaces) are not necessarily continuous (therefore not differentiable) \footnote{An example of such a matrix is presented in [Kato \citenum{kato2013perturbation}, p. 111]}. Continuity (differentiability) of the eigenvectors of $S_1(t)$ requires $S_1(t)$ to be holomorphic (analytic) (see \cite{kato2013perturbation}, Theorem 1.10). However, in our case we only have continuity of $S_1(t)$ and thus we cannot differentiate $v(t)$.
Instead, we can show [see \cite{abou2012matrix}, Theorem 3.6.1] that $\lambda_{S_1}^{max}(t)$ is not only continuous but also differentiable almost everywhere, and at any point that it is differentiable its derivative satisfies
\begin{align}
\frac{d}{dt} \lambda_{S_1}^{max}(t) =& v^\intercal(t) \dot{S_1} v(t) \nonumber \\
=& -v^\intercal(t) \left( S_1 A + A^\intercal S_1 + S_1 H_1 S_1 \right) v(t) \nonumber\\
& + \sum_{j=1}^M \left( v^\intercal(t) S_1 H_j S_j v(t) + v^\intercal(t) S_j H_j S_1 v(t) \right) \nonumber\\
& - v^\intercal(t) Q_1 v(t) \nonumber
\end{align}
which can be simplified as
\begin{multline}
\frac{d}{dt} \lambda_{S_1}^{max}(t) = \lambda_{S_1}^{max}(t) v^\intercal(t) \Big[ {- A - A^\intercal} - \lambda_{S_1}^{max}(t) H_1 \\
+ \sum_{j=1}^M \big( H_j S_j + S_j H_j \big) \Big] v(t) - v^\intercal(t) Q_1 v(t). \nonumber
\end{multline}
Now for sufficiently small $\lambda_{S_1}^{max}(t)$ we have
\begin{equation}\label{eqn:lambdaapprox}
\frac{d}{dt} \lambda_{S_1}^{max}(t) \approx  - v^\intercal(t) Q_1 v(t) > 0
\end{equation}
since $Q_1 < 0$. Given that  $S_1(t_f)=S_{1f} < 0$, we know $\lambda_{S_1}^{max}(t_f) < 0$ and hence by (\ref{eqn:lambdaapprox}) and the continuity of $\lambda_{S_1}^{max}(t)$, the maximal eigenvalue $\lambda_{S_1}^{max}(t) < 0$ for all $t \in \mathcal{I}_{t_f}^\delta$. 
This proves that $\lambda_{S_1}^{max}(t)$ starts from a negative value at $t=t_f$ and stays negative as it evolves backward in time. Noting that $\lambda_{S_1}^{max}(t)$ is the maximum eigenvalue of $S_1(t)$, this proves that $S_1(t) < 0$ for all $t \in \mathcal{I}_{t_f}^\delta$. 
Now, for $i = 2, 3..., M$, let $\overline{A}$ be as defined in (\ref{eqn:Abar}) then (\ref{eqn:Nriccati}) can be written as
\begin{equation}
\dot{S_i} + S_i \overline{A} + \overline{A}^\intercal S_i + Q_i + S_i H_i S_i = 0\label{eqn:Si_dot} \nonumber
\end{equation}
with $S_i(t_f) = S_{if}$. Let $\Psi(t,\tau)$ be the state transition matrix of $-\overline{A}^\intercal(t)$. It is known that for $t, \tau \in \mathcal{I}_{t_f}^\delta$,
\begin{equation}\label{eqn:psidef}
\frac{\partial \Psi(t,\tau)}{\partial t} = -\overline{A}^\intercal(t) \Psi(t,\tau), \quad \Psi(\tau,\tau) = I_n;
\end{equation}
Then $S_i(t)$ will satisfy
\begin{multline}\label{S_isol}
S_i(t) = \Psi(t,t_f) S_i(t_f) \Psi^\intercal(t,t_f) \\
+ \int_t^{t_f} \Psi(t,\tau) [ Q_i + S_i H_i S_i ] \Psi^\intercal(t,\tau) d\tau.
\end{multline}
Now, since $S_i(t_f) \geq 0, \; Q_i \geq 0$ and $R_i >0$ for $i = 2, 3..., M$, it follows from (\ref{S_isol}) that for all $t \in \mathcal{I}_{t_f}^\delta$ we have $S_i(t) \geq 0$ for $i = 2, 3, ..., M$. This completes the proof.
\end{proof}

\subsection{Existence of AAO Solutions}
Sufficient conditions for existence of $S_i(t)$ matrices satisfying (\ref{eqn:Nriccati}) have been derived in \cite{freiling1996global} and \cite{abou2012matrix} only for an LQD game consisting of two players and with all positive definite weight matrices (i.e. $S_{1f}>0$, $S_{2f}>0$, $Q_1>0$ and $Q_2>0$).  Clearly, in the AAO formulation these conditions do not apply since there is one player with negative definite weight matrices ($S_{1f}<0$ and $Q_1<0$). The next theorem provides sufficient conditions for existence of set of $S_i(t)$ matrices satisfying (\ref{eqn:Nriccati}) for an M-player AAO game. We first state the following two Lemmas that are needed in the proof of theorem \ref{theo:totalexistence}.

\begin{lemma}\label{lemma:Y}
	Let $Y(t)$ be a matrix that satisfies
	\begin{equation}
	\dot{Y}(t) \leq - Y(t) A - A^{\intercal} Y(t), \quad Y(t_f) = \mathbf{0}
	\label{eqn:Ydef1}
	\end{equation}
	over the interval $\mathcal{I}_{t_f}^\delta \subseteq [t_0,t_f]$. Then it follows that $Y(t) \geq 0$, for all $ t \in \mathcal{I}_{t_f}^\delta$.
\end{lemma}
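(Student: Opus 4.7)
The plan is to convert the matrix differential inequality into an equivalent linear matrix ODE with a positive semi-definite forcing term, and then evaluate the resulting solution using the state transition matrix of $A(t)$ as an integrating factor. Specifically, I would define
\begin{equation*}
W(t) \triangleq -\dot{Y}(t) - Y(t) A - A^\intercal Y(t),
\end{equation*}
which by hypothesis (\ref{eqn:Ydef1}) is symmetric and satisfies $W(t) \geq 0$ on $\mathcal{I}_{t_f}^\delta$. Then $Y(t)$ solves the linear differential Lyapunov equation
\begin{equation*}
\dot{Y}(t) = -Y(t) A - A^\intercal Y(t) - W(t), \quad Y(t_f) = \mathbf{0},
\end{equation*}
with $W(t) \geq 0$, which is exactly the form needed to apply the variation-of-constants representation.

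Next, I would let $\Phi(t,\tau)$ denote the state transition matrix associated with $A(t)$, i.e., $\partial_t \Phi(t,\tau) = A(t) \Phi(t,\tau)$ with $\Phi(\tau,\tau) = I_n$. A direct differentiation (in which the $A^\intercal Y$ and $YA$ terms cancel against the derivatives of $\Phi^\intercal$ and $\Phi$) gives
\begin{equation*}
\frac{d}{dt}\bigl[\Phi^\intercal(t,t_f)\, Y(t)\, \Phi(t,t_f)\bigr] = -\Phi^\intercal(t,t_f)\, W(t)\, \Phi(t,t_f).
\end{equation*}
Integrating backward from $t$ to $t_f$ and using $Y(t_f) = \mathbf{0}$ together with $\Phi(t_f,t_f) = I_n$ yields the closed-form representation
\begin{equation*}
\Phi^\intercal(t,t_f)\, Y(t)\, \Phi(t,t_f) = \int_{t}^{t_f} \Phi^\intercal(\tau,t_f)\, W(\tau)\, \Phi(\tau,t_f)\, d\tau.
\end{equation*}

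To conclude, I would observe that the right-hand side is an integral of symmetric positive semi-definite matrices and is therefore itself positive semi-definite. Since $\Phi(t,t_f)$ is invertible for every $t$ (being a state transition matrix), pre- and post-multiplying by $\Phi^{-\intercal}(t,t_f)$ and $\Phi^{-1}(t,t_f)$ preserves positive semi-definiteness and delivers $Y(t) \geq 0$ for all $t \in \mathcal{I}_{t_f}^\delta$. I do not expect a serious obstacle in this argument; the only point requiring mild attention is the measurability and local integrability of $W(\tau)$, which follows from the regularity of $Y$ that is implicit in the statement (so that the integral on the right-hand side is well-defined). The essential idea is simply to recognize the inequality as a Lyapunov equation with a PSD slack term and to propagate the terminal condition $Y(t_f)=\mathbf{0}$ backward via the standard integrating factor.
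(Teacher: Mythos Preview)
Your proof is correct and follows essentially the same route as the paper: both conjugate $Y(t)$ by the state transition matrix $\Phi$ of $A$, observe that the derivative of $\Phi^\intercal Y\Phi$ equals $\Phi^\intercal(\dot Y+A^\intercal Y+YA)\Phi\le 0$, and integrate backward from the terminal condition $Y(t_f)=\mathbf 0$. The only cosmetic differences are that you name the slack $W(t)\ge 0$ explicitly and fix the second argument of $\Phi$ at $t_f$ (then use invertibility of $\Phi(t,t_f)$), whereas the paper keeps the inequality, carries a free parameter $\tau$, and finally sets $\tau=t$; neither change affects the substance of the argument.
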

\begin{proof}
   Define $Z(t,\tau) = \Phi^\intercal (t,\tau) Y(t) \Phi(t,\tau)$ while $\Phi(t,\tau)$ for all $t, \tau \in \mathcal{I}_{t_f}^\delta$ is defined as follows
	\begin{equation}\nonumber
	\frac{\partial \Phi(t,\tau)}{\partial t} = A \Phi(t,\tau), \quad \Phi(\tau,\tau) = I_n.
	\end{equation}
	Since $Y(t_f) = \mathbf{0}$ and $\Phi(t,\tau)$ is bounded due to $A$ being bounded, then $Z(t_f,\tau) = \mathbf{0}$ for all $\tau \in \mathcal{I}_{t_f}^\delta$. Also,
	\begin{equation}\nonumber
	\frac{\partial Z(t,\tau)}{\partial t} = \Phi^\intercal (t,\tau) \big[ A^\intercal Y + \dot{Y} + Y A\big] \Phi(t,\tau) \leq 0
	\end{equation}
	where $Y$ is a solution of (\ref{eqn:Ydef1}). Now if we define $g(t,\tau,v) = v^\intercal Z(t,\tau) v$ for all $t, \tau \in \mathcal{I}_{t_f}^\delta$ and $v \in \mathbb{R}^{n}$ then $\frac{\partial}{\partial t} g(t,\tau,v) \leq 0$ for all $t \in \mathcal{I}_{t_f}^\delta$. Therefore the mean value theorem yields
	\begin{equation}\nonumber
	{0 \leq g(t_1,\tau,v) - g(t_2,\tau,v)} = v^\intercal [ Z(t_1,\tau) - Z(t_2,\tau) ] v
	\end{equation}
	for all $t_1, t_2 \in \mathcal{I}_{t_f}^\delta$ where $t_1 \leq t_2$, and as a result  $Z(t_1,\tau) \geq Z(t_2,\tau)$. We conclude that $Z(t,\tau) \geq Z(t_f,\tau) = \mathbf{0}$ for all $t, \tau \in \mathcal{I}_{t_f}^\delta$. Finally choosing $\tau = t$ yields $Y(t) \geq 0$ for all $t \in \mathcal{I}_{t_f}^\delta$. 
\end{proof}

Now, for a bounded symmetric matrix $Q$, define the following map from $\prod_1^M \mathbb{R}^{n \times n} $ to $\mathbb{R}^{n \times n}$ as follows
\begin{multline}\label{eqn:RR}
\mathcal{R}_Q(W_1, ... , W_M) \triangleq Q + W_1 H_1 W_1 - \sum_{i=2}^M W_i H_i W_i \\
+ \Big( -W_1 + \sum_{i=2}^M W_i \Big)  \sum_{j=1}^M H_j W_j + \sum_{j=1}^M W_j H_j \Big( -W_1 + \sum_{i=2}^M W_i \Big)
\end{multline}
and let $L_Q(t)$ be the unique solution of following terminal value linear differential equation
\begin{multline}\label{eqn:LQ}
\dot{L}_Q(t) =  - L_Q(t) A - A^{\intercal} L_Q(t) - \Big(Q - Q_1 + \sum_{i=2}^M Q_i \Big)
\end{multline}
with boundary condition $L_Q(t_f) = -S_{1f} + \sum_{i=2}^M S_{if}$. The following lemma provides the condition under which solutions $S_i(t)$ of (\ref{eqn:Nriccati}) stay in the following bounded set:
\begin{equation}\label{eqn:Edef}
\mathcal{E} \triangleq \left \{ S \in \mathbb{R}^{n \times n} | S=S^\intercal, \; \norm{S}_F \leq \overline{l_q} \right \}
\end{equation} 
where $\overline{l_q} = \sup\limits_{t \in [t_0,t_f]} \norm{L_Q(t)}_F$. We also define the space $\mathbb{\chi} \triangleq C\left(\mathcal{I}_{t_f}^\delta, \prod_1^M \mathcal{E} \right)$. Space $\mathbb{\chi}$ is complete under the norm\footnote{Since it is a closed subset of $C\left( \mathcal{I}_{t_f}^\delta, \prod_1^M \mathbb{R}^{n \times n}  \right)$ which is a Banach space under the same norm.}
\begin{equation}\label{eqn:normx}
\norm{(S_1,\dots,S_M)}_\chi \triangleq \sup\limits_{t \in \scalebox{0.9}{$\mathcal{I}_{t_f}^\delta$}} \sum_{j=1}^M \norm{S_j}_F.
\end{equation}

This lemma enables us to establish Theorem \ref{theo:totalexistence} on the existence of a set of solutions $S_i(t)$ of (\ref{eqn:Nriccati}).

\begin{lemma}\label{lemma:Sbound}
For the AAO game,
every solution set \{$S_i(t)$, $i=1,2,...,M$\} satisfying (\ref{eqn:Nriccati}) over any interval $\mathcal{I}_{t_f}^\delta \subseteq [t_0,t_f]$ stays within $\mathcal{E}$ for all $t \in \mathcal{I}_{t_f}^\delta \subseteq [t_0,t_f]$ provided that $\mathcal{R}_Q$ maps $\mathbb{R}_-^{n \times n} \times (\prod_2^M \mathbb{R}_+^{n \times n})$ into $\mathbb{R}_+^{n \times n}$.
\end{lemma}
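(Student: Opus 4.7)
The plan is to construct a linear matrix differential inequality whose solution dominates each $S_i(t)$ in the L\"owner order, and then to invoke Lemma \ref{lemma:Y} to bound that dominator by $L_Q(t)$. I would start by defining the auxiliary matrix
\[
W(t) \triangleq -S_1(t) + \sum_{i=2}^M S_i(t),
\]
which, by Theorem \ref{lemma:Sdef}, is a sum of symmetric PSD matrices on $\mathcal{I}_{t_f}^\delta$. In particular $W(t) \geq -S_1(t) \geq 0$ and $W(t) \geq S_i(t) \geq 0$ for every $i \geq 2$.

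Next, I would differentiate $W$ along the coupled Riccati system (\ref{eqn:Nriccati}). The cubic cross-terms $\sum_j(S_i H_j S_j + S_j H_j S_i)$, once the $i=1$ contribution is segregated from $i \geq 2$, factor through the combination $-S_1 + \sum_{i=2}^M S_i = W$ and regroup precisely into the expression defining $\mathcal{R}_Q$ in (\ref{eqn:RR}), giving the identity
\[
\dot{W} = -W A - A^\intercal W - \left(Q - Q_1 + \sum_{i=2}^M Q_i\right) + \mathcal{R}_Q(S_1,\dots,S_M).
\]
Setting $Y(t) \triangleq L_Q(t) - W(t)$ and subtracting (\ref{eqn:LQ}) from the above cancels the affine drift, and the boundary condition $L_Q(t_f)=-S_{1f}+\sum_{i=2}^M S_{if}$ is matched to $W(t_f)$ by construction, so that
\[
\dot{Y} = -YA - A^\intercal Y - \mathcal{R}_Q(S_1,\dots,S_M), \qquad Y(t_f)=0.
\]

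Now the hypothesis of the lemma activates: Theorem \ref{lemma:Sdef} places $(S_1,\dots,S_M) \in \mathbb{R}_-^{n\times n}\times \prod_2^M \mathbb{R}_+^{n\times n}$, hence $\mathcal{R}_Q(S_1,\dots,S_M)\geq 0$. Therefore $\dot{Y}\leq -YA-A^\intercal Y$ with $Y(t_f)=0$, and Lemma \ref{lemma:Y} immediately yields $Y(t)\geq 0$, i.e. $L_Q(t)\geq W(t)$ throughout $\mathcal{I}_{t_f}^\delta$. Chaining with the L\"owner inequalities above produces $0\leq -S_1(t)\leq L_Q(t)$ and $0\leq S_i(t)\leq L_Q(t)$ for $i\geq 2$. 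Weyl's monotonicity theorem then bounds every eigenvalue of $\pm S_i(t)$ by the corresponding eigenvalue of $L_Q(t)$, and taking traces of squares gives $\norm{S_i(t)}_F \leq \norm{L_Q(t)}_F \leq \overline{l_q}$ for each $i$, so $S_i(t)\in\mathcal{E}$ as claimed.

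The main obstacle is the algebraic identity in the second paragraph: one must verify that the cubic cross-terms of the Riccati system, after the substitution $W=-S_1+\sum_{i\geq 2}S_i$, collapse into exactly $\mathcal{R}_Q(S_1,\dots,S_M)-Q$ with the sign asymmetry between player $1$ and the team players matching the structure of (\ref{eqn:RR}). Recognizing this is exactly what motivates the otherwise opaque definition of $\mathcal{R}_Q$; once the identity is in hand, the rest of the argument is a direct appeal to Theorem \ref{lemma:Sdef}, Lemma \ref{lemma:Y}, and a standard eigenvalue-monotonicity bound on the Frobenius norm.
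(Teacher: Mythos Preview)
Your proposal is correct and follows essentially the same route as the paper: defining $W=-S_1+\sum_{i\ge 2}S_i$, setting $Y=L_Q-W$, verifying the identity $\dot Y=-YA-A^\intercal Y-\mathcal{R}_Q(S_1,\dots,S_M)$, invoking Theorem~\ref{lemma:Sdef} and Lemma~\ref{lemma:Y} to get $0\le -S_1,S_i\le L_Q$, and then passing to the Frobenius-norm bound. Your write-up is in fact slightly more explicit than the paper's in spelling out the L\"owner chain $0\le S_i\le W\le L_Q$ and the eigenvalue-monotonicity step justifying $\norm{S_i}_F\le\norm{L_Q}_F$; the only point you omit that the paper mentions is the preliminary observation (via Gronwall) that $L_Q$ exists and is bounded on $[t_0,t_f]$, so that $\overline{l_q}$ and $\mathcal{E}$ are well defined.
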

\begin{proof}
Since $L_Q(t)$ satisfies the linear differential equation (\ref{eqn:LQ}) and matrices $A$, $Q$ and $Q_i$ are bounded, then (by Gronwall's Lemma \cite{perko2013differential} and Theorem 1.1.5 in \cite{abou2012matrix}) $L_Q(t)$ exists for all $t \in [t_0, t_f]$. Therefore, $\overline{l_q}$ exists and the set $\mathcal{E}$ in (\ref{eqn:Edef}) is well-defined and bounded.
Next, we prove that 
\begin{equation}
\label{eqn:Wineq2}
0 < -S_1(t) + \sum_{i=2}^M S_i(t) \leq L_Q(t)
\end{equation}
The left inequality of (\ref{eqn:Wineq2}) follows from Theorem \ref{lemma:Sdef}. Now for the right inequality, define
\begin{equation}\nonumber
Y(t) \triangleq L_Q(t) + S_1(t) - \sum_{i=2}^M S_i(t)
\end{equation}
then after considerable algebraic manipulations (\ref{eqn:Nriccati}), (\ref{eqn:RR}) and (\ref{eqn:LQ}) yield
\begin{equation}\nonumber
\dot{Y}= - Y A - A^{\intercal} Y - \mathcal{R}_Q(S_1, ... , S_M)
\end{equation}
with boundary condition $Y(t_f) = \mathbf{0}$. Thus 
\begin{equation}\nonumber
\dot{Y}\leq-YA-A^{\intercal}Y
\end{equation} 
over $\mathcal{I}_{t_f}^\delta \subseteq [t_0,t_f]$ provided that $\mathcal{R}_Q(S_1, ... , S_M) \geq 0$ which follows by the assumption and Theorem \ref{lemma:Sdef}.
Therefore, according to Lemma \ref{lemma:Y} we conclude $Y(t) \geq 0$ for all $t \in \mathcal{I}_{t_f}^\delta$. Therefore, inequality (\ref{eqn:Wineq2}) is proved for all $t \in \mathcal{I}_{t_f}^\delta$. Since according to Theorem \ref{lemma:Sdef}, $-S_1(t) > 0$ and $S_i(t) \geq 0$ for $i=2,3,\dots,M$, it follows that for $i=1,2,\dots,M$ and for all $t \in \mathcal{I}_{t_f}^\delta$
\begin{align}
\norm{S_i(t)}_F =& tr\{S_i(t) S_i^\intercal(t)\} \nonumber\\
 			\leq & tr\{L_Q(t) L_Q^\intercal(t)\} \nonumber\\
            \leq & \overline{l_q}
\end{align}
This completes the proof.
\end{proof}

\begin{theorem}\label{theo:totalexistence}
For the AAO game, there exists a unique solution set \{$S_i(t)$, $i=1,2,...,M$\} satisfying (\ref{eqn:Nriccati}) for all $t \in [t_0,t_f]$
provided that $\mathcal{R}_Q$ maps $\mathbb{R}_-^{n \times n} \times (\prod_2^M \mathbb{R}_+^{n \times n})$ into $\mathbb{R}_+^{n \times n}$.
\end{theorem}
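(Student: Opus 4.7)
The plan is to combine a local Banach fixed-point argument with the a priori Frobenius-norm bound from Lemma~\ref{lemma:Sbound} in order to preclude finite escape and extend the local solution to the entire interval $[t_0, t_f]$. The hypothesis on $\mathcal{R}_Q$ enters only through Lemma~\ref{lemma:Sbound}, which confines any solution of (\ref{eqn:Nriccati}) to the invariant set $\prod_1^M \mathcal{E}$ while it exists.

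For local existence on a short terminal interval $\mathcal{I}_{t_f}^\delta$, I would recast (\ref{eqn:Nriccati}) as the equivalent integral equation
\begin{equation*}
S_i(t) = S_{if} + \int_t^{t_f} F_i\big(S_1(\tau), \ldots, S_M(\tau)\big)\, d\tau,
\end{equation*}
where $F_i$ denotes the right-hand side of (\ref{eqn:Nriccati}), and analyze the associated operator $\mathcal{T}$ on the complete space $\chi$ under the norm (\ref{eqn:normx}). Since each $F_i$ is polynomial in the $S_j$'s and $A$, $Q_i$, $H_i$ are uniformly bounded, $F_i$ is Lipschitz on the bounded set $\prod_1^M \mathcal{E}$. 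Standard estimates then show that for $\delta>0$ sufficiently small, depending only on $\overline{l_q}$ and sup-norms of the data, $\mathcal{T}$ is a strict contraction on a closed ball in $\chi$, so the Banach fixed-point theorem yields a unique solution on $\mathcal{I}_{t_f}^\delta$. To extend this solution globally, I would iterate: by Lemma~\ref{lemma:Sbound} the local solution lies in $\prod_1^M \mathcal{E}$, so the new terminal data $\{S_i(t_f-\delta)\}$ again lie in $\prod_1^M \mathcal{E}$, and the Banach fixed-point argument can be restarted with the same step size $\delta$. After at most $\lceil (t_f-t_0)/\delta \rceil$ such restarts, the solution is built on all of $[t_0,t_f]$, and global uniqueness follows by patching local uniqueness.

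The main obstacle I anticipate is justifying that the contraction step size $\delta$ is genuinely uniform across restarts. This is where the a priori bound of Lemma~\ref{lemma:Sbound} does real work beyond merely ruling out blow-up: on the invariant ball $\prod_1^M \mathcal{E}$, the Lipschitz constant of the polynomial map $F_i$ is controlled in terms of $\overline{l_q}$ and the fixed data bounds alone, so it does not deteriorate as the extension proceeds. Once this uniformity is established, the extension procedure terminates in finitely many steps and closes the proof.
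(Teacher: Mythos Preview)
Your proposal is correct and follows essentially the same route as the paper's proof: recast (\ref{eqn:Nriccati}) as integral equations, use a Banach fixed-point argument on the space $\chi$ with a Lipschitz constant bounded uniformly in terms of $\overline{l_q}$ and the data, then iterate backward using Lemma~\ref{lemma:Sbound} to keep the terminal data in $\prod_1^M \mathcal{E}$ and hence preserve the same step size at each restart. The paper makes the contraction step explicit by taking $\widehat{\delta}=\tfrac{1}{2M\overline{K}_c}$ with $\overline{K}_c = 2\sup_t\norm{A}_F + 2M\overline{h}\,\overline{l_q}$, but the structure of the argument is identical to what you outline.
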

\begin{proof}
The Riccati differential equations in (\ref{eqn:Nriccati}) are equivalent to the following integral equations for $i=1,2,\dots,M$,
\begin{equation}
S_i(t) = S_{if} + \int_t^{t_f} f_i(\tau,S_1,\dots,S_M) d\tau
\end{equation}
where
\begin{equation}\nonumber
f_i\triangleq S_i A + A^\intercal S_i + Q_i - S_i H_i S_i - \sum_{\substack{j=1, j \neq i}}^M \big(S_i H_j S_j + S_j H_j S_i \big).
\end{equation}
Functions $f_i$ are locally Lipschitz for $i=1,2,\dots,M$ because
\begin{align}
\norm{\Delta f_i}_F =& \norm{f_i(t,S_1,\dots,S_M) - f_i(t,S_1^*,\dots,S_M^*)}_F \nonumber \\
		=&  \lVert(S_i - S_i^*) A + A^\intercal (S_i - S_i^*) \nonumber \\
		& - S_i H_i (S_i - S_i^*)-  (S_i - S_i^*) H_i S_i^* \nonumber \\
		&- \sum_{\substack{j=1, j \neq i}}^M \Big((S_i - S_i^*) H_j S_j + S_j^* H_j (S_i - S_i^*)\nonumber\\
		& + S_i^* H_j (S_j - S_j^*) + (S_j - S_j^*) H_j S_i \Big)\rVert_F \nonumber \\
		\leq& \norm{S_i - S_i^*}_F \Big\{ 2 \norm{A}_F  \nonumber \\
        &+ \overline{h} \sum_{j=1}^M \Big(\norm{S_j}_F + \norm{S_j^*}_F \Big) \Big\} \nonumber\\
		&+ \hspace{-10pt} \sum_{\substack{j=1, j \neq i}}^M \hspace{-7pt} \norm{S_j - S_j^*}_F \Big\{\overline{h}\Big( \norm{S_i^*}_F + \norm{S_i}_F \Big) \Big\} \nonumber\\
		\leq& K_c \sum_{j=1}^M \norm{S_j - S_j^*}_F \label{eqn:flip}
\end{align}
where
\begin{equation}
K_c = 2 \norm{A}_F  + \overline{h} \sum_{j=1}^M \Big(\norm{S_j}_F + \norm{S_j^*}_F \Big)
\end{equation}
and $\overline{h} = \max\limits_{1 \leq i \leq M}\norm{H_i}_F$. 
It follows that, $K_c$ is bounded for all $S_i, S_i^* \in \mathcal{E}$ as follows
\begin{equation}\label{eqn:kcbound}
 \sup\limits_{t  \in \scalebox{0.9}{$\mathcal{I}_{t_f}^\delta$}} K_c \leq 2 \sup\limits_{ t \in [t_0,t_f]} \norm{A}_F  + 2 M \overline{h} \; \overline{l_q} \triangleq \overline{K}_c
\end{equation}
for some $\delta > 0$ to be determined later and denoted as $\widehat{\delta}$ according to the following process.

\textbf{\textit{Process:}} Noting from (\ref{eqn:Edef}) that the boundary condition $S_{if} \in \mathcal{E}$ for $i=1,2\dots,M$.
Next, operator $\mathcal{T} \colon \chi \to \chi$ is defined as follows:
\begin{multline}\label{eqn:defT}
\mathcal{T}(S_1,\dots,S_M)_{(t)} \triangleq \\
\left\lbrace \left( S_{1f} + \int_t^{t_f} f_1 d\tau\right) \; ,\dots, \; \left( S_{Mf} + \int_t^{t_f} f_M d\tau \right)\right\rbrace.
\end{multline}
Operator $\mathcal{T}$ is a contraction mapping for $ \delta = \widehat{\delta} = \frac{1}{2M \overline{K}_c}$, since
\begin{align}
\norm{\Delta \mathcal{T}}_\chi =& \norm{\mathcal{T}(S_1,\dots,S_M)_{(t)} - \mathcal{T}(S_1^*,\dots,S_M^*)_{(t)}}_\chi \nonumber\\
		=&\norm{ \left\lbrace \int_t^{t_f} \Delta f_1 d\tau \; ,\dots, \;  \int_t^{t_f} \Delta f_M d\tau \right\rbrace }_\chi \nonumber\\
					  =& \sup\limits_{ t \in \scalebox{0.9}{$\mathcal{I}_{t_f}^{\widehat{\delta}}$}} \sum_{i=1}^M \norm{\int_t^{t_f} \Delta f_i d\tau}_F \nonumber\\
                      \leq& \sup\limits_{t \in \scalebox{0.9}{$\mathcal{I}_{t_f}^{\widehat{\delta}}$}} \sum_{i=1}^M \int_t^{t_f} \norm{ \Delta f_i }_F d\tau \nonumber\\
                      \leq& {\widehat{\delta}} M \sup\limits_{t \in \scalebox{0.9}{$\mathcal{I}_{t_f}^{\widehat{\delta}}$}} \norm{ \Delta f_i }_F \nonumber\\
                      \leq& {\widehat{\delta}} M \overline{K}_c \norm{(S_1,\dots,S_M) - (S_1^*,\dots,S_M^*)}_\chi \nonumber\\
                      =& \frac{1}{2} \norm{(S_1,\dots,S_M) - (S_1^*,\dots,S_M^*)}_\chi
\end{align}
for all $(S_1,\dots,S_M), (S_1^*,\dots,S_M^*) \in \chi$.
By the contraction mapping theorem \cite{arnold1992ordinary}, there exists a unique solution set \{$S_i(t)$, $i=1,2,...,M$\} of (\ref{eqn:Nriccati}) over the interval $[t_f-\widehat{\delta},t_f]$.
Furthermore, by the assumption and Theorem \ref{lemma:Sdef} it follows that $\mathcal{R}_Q(S_1, ... , S_M)  \geq 0$ over $\mathcal{I}_{t_f}^{\widehat{\delta}}$, and by applying Lemma \ref{lemma:Sbound} on $S_i(t)$ over $\mathcal{I}_{t_f}^{\widehat{\delta}}$ , we conclude that $S_i(t_f-\widehat{\delta}) \in \mathcal{E}$ for $i=1,2,...,M$. 

Now, we extend the local existence property established using the above process to the entire interval $[t_0, t_f]$ as follows.
We repeat the same process for the Riccati equations in (\ref{eqn:Nriccati}) over the interval $\mathcal{I}_{t_f-\widehat{\delta}}^{\widehat{\delta}} = [t_f - 2\widehat{\delta}, t_f-\widehat{\delta}]$ with $S_i(t_f-\widehat{\delta}) \in \mathcal{E}$ as the new boundary condition. By recalling that the Lipschitz condition (\ref{eqn:flip}) holds and defining a contraction mapping as in (\ref{eqn:defT}) over a new space $\chi$ defined over the new interval $\mathcal{I}_{t_f-\widehat{\delta}}^{\widehat{\delta}} =[t_f-2\widehat{\delta},t_f-\widehat{\delta}]$, it can be similarly shown that $S_i(t)$ exist over this interval and that based on Lemma \ref{lemma:Sbound} $S_i(t_f-2\widehat{\delta}) \in \mathcal{E}$.
Finally, this process can be repeated over successive intervals $\mathcal{I}_{t_f-k\widehat{\delta}}^{\widehat{\delta}} =[t_f-k\widehat{\delta},t_f-(k-1)\widehat{\delta}]$ for $k=1,2,\dots$. The initial time $t_0$ will be reached since at every iteration $\widehat{\delta} = \frac{1}{2M \overline{K}_c} > 0$ and $\overline{K}_c$ is a uniform bound on $\mathcal{E}$ as shown in (\ref{eqn:kcbound}). This ends the proof.
\end{proof}

\begin{remark}For the existence of AAO solutions note that:
\begin{enumerate}
\item The purpose of the symmetric matrix $Q$ appearing in (\ref{eqn:LQ}) and Theorem \ref{theo:totalexistence} is to establish the upper bound $L_Q(t)$ described in (\ref{eqn:Edef}).

\item It can be shown that the condition 
$\mathcal{R}_Q \left(\mathbb{R}_-^{n \times n} \times (\prod_2^M \mathbb{R}_+^{n \times n}) \right) \subset \mathbb{R}_+^{n \times n}$
is fulfilled for several sub-classes of AAO games, for which explicit conditions on existence of a solution can be determined in terms of weighting matrices $Q_i$ and $S_{if}$. One such subclass is illustrated in Corollary \ref{coro:exist}.
\end{enumerate}
\end{remark}

Before providing the explicit conditions of existence, we provide the following lemma which is used in the proof of the next corollary and Theorem \ref{theo:capt}.

\begin{lemma}\label{lemma:Ssum}
	For the AAO game, if $Q_1 + \sum_{i=2}^M Q_i > 0$ and $S_{1f} + \sum_{i=2}^M S_{if} > 0$, then for every solution set \{$S_i(t)$, $i=1,2,...,M$\} satisfying (\ref{eqn:Nriccati}) over any interval $\mathcal{I}_{t_f}^\delta \subseteq [t_0,t_f]$, it follows that $P(t) >0,$ for all $t \in \mathcal{I}_{t_f}^\delta$.
\end{lemma}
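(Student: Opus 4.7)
The plan is to interpret $P(t)$ as the sum $P(t)\triangleq S_1(t)+\sum_{i=2}^M S_i(t)$ (which is what makes the hypothesis on $S_{1f}+\sum_{i=2}^M S_{if}$ and on $Q_1+\sum_{i=2}^M Q_i$ the natural data for the Lyapunov-type equation that $P$ ought to satisfy), and then to mimic the integral representation argument already used in Theorem \ref{lemma:Sdef} for the $i\geq 2$ matrices. First I would simply add the $M$ Riccati equations (\ref{eqn:Nriccati}) to obtain a differential equation for $P(t)$, then exploit the identity $\sum_{j=1}^M H_j S_j = A-\overline{A}$ (and its transpose $\sum_j S_j H_j = A^\intercal-\overline{A}^\intercal$) to collapse the cross-coupling double sum into $P(A-\overline{A})+(A^\intercal-\overline{A}^\intercal)P$.

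After that cancellation, the remaining terms telescope into a closed-loop Lyapunov differential equation
\begin{equation}
\dot{P}+P\overline{A}+\overline{A}^\intercal P+\sum_{i=1}^M Q_i+\sum_{i=1}^M S_i H_i S_i=0,\qquad P(t_f)=S_{1f}+\sum_{i=2}^M S_{if},\nonumber
\end{equation}
which is formally identical in structure to the one used to derive (\ref{S_isol}). Letting $\Psi(t,\tau)$ be the state transition matrix of $-\overline{A}^\intercal$ as defined in (\ref{eqn:psidef}), variation of parameters yields
\begin{equation}
P(t)=\Psi(t,t_f)\,P(t_f)\,\Psi^\intercal(t,t_f)+\int_t^{t_f}\Psi(t,\tau)\Bigl[\sum_{i=1}^M Q_i+\sum_{i=1}^M S_i H_i S_i\Bigr]\Psi^\intercal(t,\tau)\,d\tau.\nonumber
\end{equation}

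To conclude, I would note three facts: (i) $\Psi(t,t_f)$ is nonsingular, so $\Psi(t,t_f)P(t_f)\Psi^\intercal(t,t_f)>0$ by the hypothesis $P(t_f)>0$; (ii) the integrand is at least positive semidefinite since $\sum_{i=1}^M Q_i>0$ by assumption and each quadratic form $S_iH_iS_i=(B_i^\intercal S_i)^\intercal R_i^{-1}(B_i^\intercal S_i)\geq 0$ because $R_i^{-1}>0$; (iii) summing a positive definite matrix with a positive semidefinite one preserves positive definiteness. Hence $P(t)>0$ for every $t\in\mathcal{I}_{t_f}^\delta$. The only step that requires any real care is the algebraic reduction of the sum of Riccati equations — in particular verifying that the terms $PA+A^\intercal P$ combine with the cross-coupling double sum to produce exactly $P\overline{A}+\overline{A}^\intercal P$; once that identity is in place, the transition-matrix argument and the positivity conclusion follow immediately from the same template used earlier in the proof of Theorem \ref{lemma:Sdef}.
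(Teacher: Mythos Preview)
Your proposal is correct and follows essentially the same approach as the paper: define $P(t)=\sum_{i=1}^M S_i(t)$, sum the Riccati equations to obtain the closed-loop Lyapunov equation $\dot P+P\overline{A}+\overline{A}^\intercal P+\sum_i Q_i+\sum_i S_iH_iS_i=0$, and then apply the variation-of-parameters formula with the transition matrix $\Psi$ of $-\overline{A}^\intercal$ to conclude $P(t)>0$. Your write-up is in fact slightly more explicit than the paper's about the algebraic collapse of the cross terms and about the nonsingularity of $\Psi(t,t_f)$, but the argument is the same.
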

\begin{proof}
	For every solution set \{$S_i(t)$, $i=1,2,...,M$\} that satisfies (\ref{eqn:Nriccati}), let \begin{equation}
P(t)= S_1(t) + \sum_{i=2}^M S_i(t) \label{eqn:Pdef}
\end{equation} then it would satisfy the following
\begin{equation}\label{eqn:Pdot}
\dot{P} + P \overline{A} + \overline{A}^\intercal P +  Q_1 + \sum_{i=2}^M  Q_i +  S_1 H_1 S_1 + \sum_{i=2}^M S_i H_i S_i = 0;
\end{equation}
with boundary condition $P(t_f) =  S_{1f} + \sum_{i=2}^M S_{if}$.
Then $P(t)$ will satisfy
\begin{multline}\label{Psol}
P(t) = \Psi(t,t_f) P(t_f) \Psi^\intercal(t,t_f) + \int_t^{t_f} \Psi(t,\tau) \Big[  Q_1 \\
+ \sum_{i=2}^M  Q_i +  S_1 H_1 S_1 + \sum_{i=2}^M S_i H_i S_i \Big] \Psi^\intercal(t,\tau) d\tau.
\end{multline}
where $\Psi(t,\tau)$ is as in (\ref{eqn:psidef}). Since matrices $R_i >0$ for $i = 1,2,\dots,M$, this yields $H_i \geq 0$, and from the hypothesis of the lemma and (\ref{Psol}), it follows that $P(t) > 0$ for all $t \in \mathcal{I}_{t_f}^\delta$. This completes the proof.
\end{proof}

\begin{corollary}\label{coro:exist}
Consider an AAO game with diagonal matrices $A$, $B_i$, $S_{if}$, $Q_i$ and $R_i$ for $i=1,2,\dots,M$. If $ Q_1 + \sum_{i=2}^M Q_i > 0$, $ S_{1f} + \sum_{i=2}^M S_{if} > 0$, and $H_1 \leq H_i$ for $i=2,3,\dots,M$, then $\mathcal{R}_Q$ maps $\mathbb{R}_-^{n \times n} \times (\prod_2^M \mathbb{R}_+^{n \times n})$ into $\mathbb{R}_+^{n \times n}$ for any $Q \geq 0$ and thus a unique solution set \{$S_i(t)$, $i=1,2,...,M$\} of (\ref{eqn:Nriccati}) exists for all $t \in [t_0,t_f]$.
\end{corollary}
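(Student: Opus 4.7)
The plan has two stages: (i) verify the functional hypothesis of Theorem~\ref{theo:totalexistence}, namely that $\mathcal{R}_Q(W_1,\dots,W_M)\in\mathbb{R}_+^{n\times n}$ for every $W_1\in\mathbb{R}_-^{n\times n}$, $W_i\in\mathbb{R}_+^{n\times n}$ with $i=2,\dots,M$, and every $Q\ge 0$; and (ii) invoke Theorem~\ref{theo:totalexistence} to conclude the existence and uniqueness of $\{S_i(t)\}_{i=1}^M$ on $[t_0,t_f]$. Step (ii) is immediate once (i) is in hand. Moreover, the definition~(\ref{eqn:RR}) gives $\mathcal{R}_Q=Q+\mathcal{R}_0$, so the quantifier ``for every $Q\ge 0$'' collapses to the single statement $\mathcal{R}_0\ge 0$ on the prescribed cone.

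For (i), the key is to exploit the joint diagonal structure of the data. Because $B_i$ and $R_i$ are diagonal, $H_i=B_iR_i^{-1}B_i^\intercal$ is diagonal; together with diagonal $A$, $Q_i$ and $S_{if}$, the Riccati system~(\ref{eqn:Nriccati}) preserves diagonality, so every candidate solution and every iterate produced by the contraction operator $\mathcal{T}$ used in the proof of Theorem~\ref{theo:totalexistence} remains diagonal. It therefore suffices to verify $\mathcal{R}_Q(W_1,\dots,W_M)\ge 0$ when each $W_i$ is diagonal, which decouples the matrix inequality into $n$ independent scalar inequalities, one per diagonal index.

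Fix one diagonal index and let $w_i, h_i, q$ denote the scalar entries of $W_i, H_i, Q$; set $v_1=-w_1\ge 0$ and $v_i=w_i\ge 0$ for $i\ge 2$. Substituting into~(\ref{eqn:RR}) and using scalar commutativity, a direct expansion reduces the required inequality to
\[q-h_1v_1^2+\sum_{i\ge 2}h_iv_i^2+2\sum_{2\le i<j}(h_i+h_j)v_iv_j+2v_1\sum_{j\ge 2}(h_j-h_1)v_j\ge 0.\]
Every term except $-h_1v_1^2$ is manifestly non-negative: $q\ge 0$ by hypothesis, the two pure quadratic contributions in $\{v_j\}_{j\ge 2}$ are sums of non-negative monomials, and the cross-coupling $2v_1\sum_{j\ge 2}(h_j-h_1)v_j$ is non-negative precisely because of the key assumption $H_1\le H_i$. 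The heart of the argument, and its main obstacle, is to show that this cross-coupling and the quadratic piece in $\{v_j\}_{j\ge 2}$ together dominate $-h_1v_1^2$; the natural route is a completion-of-squares argument in $v_1$ that uses $h_j-h_1\ge 0$ to form a square absorbing $h_1v_1^2$, with any residual covered by the already-non-negative remainder.

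Once the scalar inequality is established at every diagonal index, $\mathcal{R}_Q$ satisfies the mapping hypothesis of Theorem~\ref{theo:totalexistence}, and that theorem delivers the unique solution $\{S_i(t)\}_{i=1}^M$ on $[t_0,t_f]$. The remaining hypotheses $Q_1+\sum_{i=2}^M Q_i>0$ and $S_{1f}+\sum_{i=2}^M S_{if}>0$ are not consumed by the existence argument itself; their role is to feed into Lemma~\ref{lemma:Ssum}, guaranteeing that $P(t)=S_1(t)+\sum_{i=2}^M S_i(t)$ stays positive definite along the trajectory, a property the paper exploits in the subsequent capture/boundedness results.
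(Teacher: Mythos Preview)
Your scalar reduction is correct, but the proposed completion-of-squares step cannot succeed. Take $q=0$, $h_1>0$, $v_1>0$ and $v_2=\cdots=v_M=0$: your displayed expression becomes $-h_1v_1^2<0$. In this regime the cross term $2v_1\sum_{j\ge 2}(h_j-h_1)v_j$ vanishes, so there is nothing with which to absorb $h_1v_1^2$. In other words, $\mathcal{R}_Q$ does \emph{not} carry the full cone $\mathbb{R}_-^{n\times n}\times\prod_{2}^{M}\mathbb{R}_+^{n\times n}$ into $\mathbb{R}_+^{n\times n}$, and no purely algebraic argument over that cone can establish what you are trying to prove.

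The paper's proof sidesteps this by not verifying the cone-mapping hypothesis literally. For diagonal arguments it bounds $\mathcal{R}_Q$ from below by
\[
H_1\Bigl(-W_1+\sum_{i\ge 2}W_i\Bigr)\Bigl(W_1+\sum_{i\ge 2}W_i\Bigr),
\]
whose sign is controlled by the second factor. That factor is not non-negative on the whole cone; its positivity \emph{along any local Riccati solution} $(S_1,\dots,S_M)$ is precisely what Lemma~\ref{lemma:Ssum} delivers, and Lemma~\ref{lemma:Ssum} consumes the hypotheses $Q_1+\sum_{i\ge 2}Q_i>0$ and $S_{1f}+\sum_{i\ge 2}S_{if}>0$. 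So your closing remark is inverted: those two hypotheses are essential to the existence argument, not residual conditions reserved for later boundedness results. The correct flow is (a) obtain the factored lower bound above; (b) combine Theorem~\ref{lemma:Sdef} with Lemma~\ref{lemma:Ssum} to make the lower bound $\ge 0$ at the Riccati solutions; (c) then re-run the contraction/extension machinery of Theorem~\ref{theo:totalexistence}, which only ever uses $\mathcal{R}_Q(S_1,\dots,S_M)\ge 0$ evaluated at solutions.
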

\begin{proof}
See Appendix \ref{proof:coro:exist}.
\end{proof}

\subsection{Boundedness of the System Trajectory}
The next theorem provides a sufficient condition for the system trajectory to be exponentially bounded. Specifically, if the final time is large enough, then the state vector of the system is guaranteed to end up in a small hyper-ball. Essentially, this theorem provides conditions that guarantee that opposing player cannot accomplish its goal of de-stabilizing the system.

\begin{theorem}\label{theo:capt}
For an AAO game with a solution set $\{S_i(t), i=1,\cdots, M\}$ satisfying (\ref{eqn:Nriccati}), if $Q_1 + \sum_{i=2}^M Q_i > 0$ and $S_{1f} + \sum_{i=2}^M S_{if} > 0$, then the state trajectory in (\ref{eqn:sysdyn}) will be exponentially bounded. In particular, for any small $r>0$, it follows $\|x(t_f)\|\leq r$ provided that
\begin{equation}
t_f-t_0 \geq \frac{\overline{\lambda}_{P}}
{\underline{\lambda}_{\widehat{Q}}} \Bigg\{ 2
\ln\left(\frac{\|x_0\|}{r}\right)
+ \ln \left(\frac{\overline{\lambda}_{P}}{\underline{\lambda}_{P}} \right)
\Bigg\}.\nonumber
\end{equation}
\end{theorem}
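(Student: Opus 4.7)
The plan is to run a Lyapunov-type argument using $P(t) = S_1(t) + \sum_{i=2}^M S_i(t)$ as the Lyapunov kernel, since Lemma \ref{lemma:Ssum} already guarantees $P(t) > 0$ on $[t_0, t_f]$ under the stated hypotheses. First, I would set $V(x,t) = x^\intercal P(t) x$ and note the two-sided bound $\underline{\lambda}_{P}\|x\|^2 \leq V \leq \overline{\lambda}_{P}\|x\|^2$. Along the closed-loop trajectory $\dot{x} = \overline{A} x$, differentiating $V$ and substituting equation (\ref{eqn:Pdot}) for $\dot{P}$ cancels the $P\overline{A} + \overline{A}^\intercal P$ terms and leaves
\begin{equation}\nonumber
\dot{V} = -x^\intercal \widehat{Q}(t)\, x, \qquad \widehat{Q}(t) \triangleq Q_1 + \sum_{i=2}^M Q_i + S_1 H_1 S_1 + \sum_{i=2}^M S_i H_i S_i.
\end{equation}

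Next, I would argue that $\widehat{Q}(t) > 0$: the hypothesis $Q_1 + \sum_{i=2}^M Q_i > 0$ handles the weight-matrix part, while the remaining quadratic terms $S_i H_i S_i$ are positive semidefinite because $H_i \geq 0$ (recall $R_i > 0$). Consequently $\underline{\lambda}_{\widehat{Q}} > 0$, so
\begin{equation}\nonumber
\dot{V} \leq -\underline{\lambda}_{\widehat{Q}}\,\|x\|^2 \leq -\frac{\underline{\lambda}_{\widehat{Q}}}{\overline{\lambda}_{P}}\,V.
\end{equation}
Gronwall's inequality then yields $V(t) \leq V(t_0)\exp\!\big(-\tfrac{\underline{\lambda}_{\widehat{Q}}}{\overline{\lambda}_{P}}(t-t_0)\big)$, and combining with the two-sided bound on $V$ gives
\begin{equation}\nonumber
\|x(t)\|^2 \leq \frac{\overline{\lambda}_{P}}{\underline{\lambda}_{P}}\,\|x_0\|^2 \exp\!\left(-\frac{\underline{\lambda}_{\widehat{Q}}}{\overline{\lambda}_{P}}(t-t_0)\right),
\end{equation}
which is the claimed exponential boundedness of the state trajectory.

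Finally, to convert this into the explicit lower bound on $t_f - t_0$ guaranteeing $\|x(t_f)\|\leq r$, I would set $t = t_f$, take square roots, impose $\|x(t_f)\|\leq r$, and solve the resulting inequality for $(t_f-t_0)$ by taking logarithms. This produces
\begin{equation}\nonumber
t_f - t_0 \geq \frac{\overline{\lambda}_{P}}{\underline{\lambda}_{\widehat{Q}}} \left\{ 2\ln\!\left(\frac{\|x_0\|}{r}\right) + \ln\!\left(\frac{\overline{\lambda}_{P}}{\underline{\lambda}_{P}}\right)\right\},
\end{equation}
matching the condition in the statement.

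The only genuine subtlety is verifying that the cross and sum terms in $\dot V$ collapse cleanly into the $\widehat{Q}$ form; this requires carefully grouping the $\overline{A}$ and $\overline{A}^\intercal$ contributions from differentiating $x^\intercal P x$ with the identical terms produced by equation (\ref{eqn:Pdot}) for $\dot P$. Every other step is a standard Lyapunov/Gronwall manipulation, so I do not expect additional obstacles beyond that bookkeeping.
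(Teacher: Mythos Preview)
Your argument is correct and follows the same Lyapunov route as the paper: set $V=x^\intercal P x$ with $P=\sum_i S_i$, differentiate along $\dot x=\overline{A}x$, cancel against equation (\ref{eqn:Pdot}), bound $\dot V$, apply Gronwall, and invert for $t_f-t_0$. The only discrepancy is what $\widehat{Q}$ denotes. The paper defines $\widehat{Q}\triangleq\sum_{i=1}^M Q_i$ and obtains $\dot V \leq -x^\intercal\widehat{Q}\,x$ by \emph{dropping} the nonnegative terms $S_iH_iS_i$; this makes $\underline{\lambda}_{\widehat{Q}}$ a constant computable directly from the problem data. You instead fold the $S_iH_iS_i$ terms into $\widehat{Q}(t)$, which yields a tighter decay rate but one that depends on the Riccati solutions themselves, so the resulting lower bound on $t_f-t_0$ is not the same quantity the theorem statement intends. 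Aside from this labeling of $\widehat{Q}$, the two proofs are identical.
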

\begin{proof}
Let
\begin{equation}
V (t)= x^\intercal(t) P(t) x(t),
\end{equation}
where the state vector $x(t)$ is defined in (\ref{eqn:sys-gen}) and $P(t)$ is as in (\ref{eqn:Pdef}). Matrix $P(t)$ is positive definite for all $t$ based on Lemma \ref{lemma:Ssum}. Then, using (\ref{eqn:sysdyn}) and (\ref{eqn:Pdot}) it follows that
\begin{equation}\label{eqn:vdot}
\dot{V}(t) 
= - x^\intercal \left( \sum_{i=1}^M  \left( Q_i + S_i H_i S_i \right) \right) x 
\end{equation}
Since, $Q_1 + \sum_{i=2}^M Q_i > 0$ and $H_i \geq 0$ for $i=1,2,\dots,M$ it follows that $\dot{V}(t) < 0$ which yields a decaying dynamics for the system. Now if we define $\widehat{Q} = \sum_{i=1}^M  Q_i$, then it follows that
\begin{equation}
\dot{V} \leq - x^\intercal \widehat{Q} x \leq  - \frac{\underline{\lambda}_{\widehat{Q}}}{\overline{\lambda}_{P}}
 V \nonumber 
\end{equation}
Then one can conclude that
\begin{equation}
V(t) \leq  V(t_0) \exp\left\{-\frac{\underline{\lambda}_{\widehat{Q}}}{\overline{\lambda}_{P}}(t-t_0)\right\} \nonumber
\end{equation}
which is an exponential decaying bound for the system. Also, we can conclude that
\begin{equation}
\left(\frac{\norm{x(t)}}{\norm{x_0}}\right)^2 \leq \frac{\overline{\lambda}_{P}}{\underline{\lambda}_{P}} \exp\left\{-\frac{\underline{\lambda}_{\widehat{Q}}}{\overline{\lambda}_{P}} (t-t_0)\right\}. \nonumber
\end{equation}
Finally, by rearranging the terms, in order to achieve $\norm{x(t_f)} \leq r$, $t_f-t_0$ should be greater than the expression provided in the theorem. This ends the proof.
\end{proof}

\section{Two Options for the Group of Players \label{sec:strategies}}
The AAO game offers several possible options for the $M-1$ players in the group.  In this paper we consider only two as illustrated in Figure \ref{fig:prob-total}.  The first option is for the players in the group to ignore the fact that they have similar cost functions and decide to design Nash strategies among themselves as well as the opposing player \cite{starr1969nonzero}.  The second option, which intuitively appears to be the better choice, is for the players in the group to form a team and cooperate among themselves in designing a collective Nash strategy against the remaining player \cite{liu2004noninferior}.  However, being restricted to operate within the framework of the team, although beneficial to the entire group, may or may not be beneficial to each player, and as a result may be beneficial to the opposing player. We will refer to the first option as the Nash solution and to the second as the Team-Nash solution. The Nash strategies of the first option are determined as described in (\ref{eqn:opt-input}) and (\ref{eqn:Nriccati}).  The Team-Nash strategies of the second option are determined by first forming a team cost function as a convex combination of the individual cost functions of the $M-1$ players in the group.  That is 
\begin{equation}
J_T=\sum_{i=1}^{M-1} \alpha_i J_{i+1}
\end{equation}
where $\alpha_i>0$, $\sum_{i=1}^{M-1} \alpha_i=1$ and $J_{i}$ for $i = 2, ..., M$ as they are defined in (\ref{eqn:cost-gen-p}).
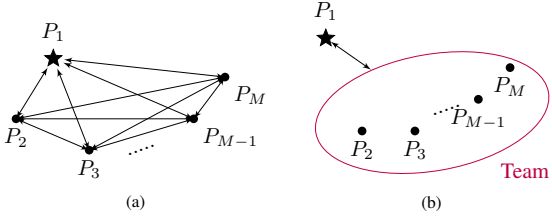
\begin{figure}[!pt]
	\centering
	\resizebox{0.22\textwidth}{!}{
		\begin{subfloat}[]{
				\centering
				\begin{tikzpicture}[
				scale=1.7,
				axis/.style={very thick, ->, >=stealth', line join=miter},
				important line/.style={thick}, dashed line/.style={dashed, thin},
				every node/.style={color=black},
				dot/.style={circle,fill=black,minimum size=4pt,inner sep=0pt,
					outer sep=-1pt},
				]
				\node[star, label=above:$P_1$, star points=5, star point ratio=2.25, draw,inner sep=1.3pt,anchor=outer point 3,fill=black] at (0.8,1.5) (E1) {};
				\node[dot,label=below:$P_2$,scale=1] at (0.5,1) (P1) {};
				\node[dot,label=below:$P_3$,scale=1] at (1.2,.7) (P2) {};
				\node[dot,label=below right:$P_{M-1}$,scale=1] at (2.2,1) (P3) {};
				\node[dot,label=below right:$P_{M}$,scale=1] at (2.5,1.4) (P4) {};  
				\node[rotate= 20] at (1.7,0.7){$.....$};
				\node[circle,scale=1] at (E1) (G) {};
				\draw[latex'-latex',  black, >=stealth']
				($(P1)$) -- (G)
				node[sloped, above, midway] {};
				\draw[latex'-latex',  black, >=stealth']
				($(P2)$) -- (G)
				node[sloped,above, midway] {};
				\draw[latex'-latex',  black, >=stealth']
				($(P3)$) -- (G)
				node[sloped,above, midway] {};
				\draw[latex'-latex',  black, >=stealth']
				($(P4)$) -- (G)
				node[sloped,above, midway] {};   
				\draw[latex'-latex',  black, >=stealth']
				($(P1)$) -- ($(P2)$)
				node[sloped, above, midway] {};
				\draw[latex'-latex',  black, >=stealth']
				($(P1)$) -- ($(P3)$)
				node[sloped, above, midway] {};
				\draw[latex'-latex',  black, >=stealth']
				($(P1)$) -- ($(P4)$)
				node[sloped, above, midway] {};
				\draw[latex'-latex',  black, >=stealth']
				($(P2)$) -- ($(P3)$)
				node[sloped, above, midway] {};
				\draw[latex'-latex',  black, >=stealth']
				($(P2)$) -- ($(P4)$)
				node[sloped, above, midway] {};
				\draw[latex'-latex',  black, >=stealth']
				($(P3)$) -- ($(P4)$)
				node[sloped, above, midway] {};
				\end{tikzpicture}
				\label{fig:prob-Nash}
			}
	\end{subfloat}}
	\resizebox{0.2\textwidth}{!}{
		\begin{subfloat}[]{
				\centering
				\begin{tikzpicture}[
				scale=1.7,
				axis/.style={very thick, ->, >=stealth', line join=miter},
				important line/.style={thick}, dashed line/.style={dashed, thin},
				every node/.style={color=black},
				dot/.style={circle,fill=black,minimum size=4pt,inner sep=0pt,
					outer sep=-1pt},
				]
				\node[star, label=above:$P_1$, star points=5, star point ratio=2.25, draw,inner sep=1.3pt,anchor=outer point 3,fill=black] at (0.5,1.5) (E1) {};
				\node[dot,label=below:$P_2$,scale=1] at (0.9,0.7) (P1) {};
				\node[dot,label=below:$P_3$,scale=1] at (1.4,.7) (P2) {};
				\node[dot,label=below:$P_{M-1}$,scale=1] at (2,1) (P3) {};
				\node[dot,label=below:$P_{M}$,scale=1] at (2.3,1.3) (P4) {};  
				\node[rotate= 20] at (1.7,0.9){$.....$};
				\node[draw, label=below right:{\color{purple}Team}, ellipse, rotate =10 ,fit={ (P1) (P2) (P4) }, xshift=-0.1cm, yshift=-0.2cm, color=purple] (C1) {};
				\draw[latex'-latex'] (C1) -- node[above,rotate=90] {} (E1);
				\end{tikzpicture}
				\label{fig:prob-team}
			}
	\end{subfloat}}
	\caption{All-against-one game \protect\subref{fig:prob-Nash} Nash Strategies: Nash among all players, \protect\subref{fig:prob-team} Team-Nash Strategies: Nash between Player 1 and team of Players 2,3,...,M.}
	\label{fig:prob-total}
\end{figure}
The weight $\alpha_i$ can be viewed as the relative contribution of Player $i+1$ to the team.
So the cost function for the team of players becomes
\begin{equation}\label{eqn:Jp-team}
J_T = \frac{1}{2} x^\intercal(t_f) S_{T_f} x(t_f) + \frac{1}{2} \int_{t_0}^{t_f} \left[x^\intercal Q_T x
+ u_T^\intercal R_T u_T \right]dt
\end{equation}
where 
\begin{align} 
S_{T_f} &= \sum_{i=1}^{M-1} \alpha_i S_{(i+1)f},\\
Q_T &= \sum_{i=1}^{M-1} \alpha_i Q_{i+1}, \\
R_T &= blkdiag \{\alpha_1 R_{2}, \; \alpha_2 R_{3},\;\dots, \;\alpha_{M-1} R_{M}\}.
\end{align}
The opposing player's cost function stays the same as $J_1$.
Also, the control inputs of the players in the group are combined as one vector but it will be reallocated to each player later at the implementation stage.
Consequently, the system dynamics of (\ref{eqn:sys-gen}) can be reorganized as follows
\begin{equation}\label{eqn:sys-team}
\dot{x} = A x + B_1 u_1 + B_T u_T
\end{equation}
where $ u_T = [u_2^\intercal, u_3^\intercal, ... , u_M^\intercal]^\intercal$  is the team control vector. 
	
The system dynamics (\ref{eqn:sys-team}) and cost functions $J_T$ and $J_1$ form a 2-player LQD game.
The closed-loop Nash strategies can be obtained from (\ref{eqn:opt-input}) as follows
\begin{eqnarray}
\begin{cases}
u_1^* &= -R_1^{-1} B_1^\intercal S_1 x \label{eqn:opt-input-2e}\\
u_T^* &= -R_T^{-1} B_T^\intercal S_T x \label{eqn:opt-input-2p}
\end{cases}
\end{eqnarray}
where the matrices $S_T$ and $S_1$ are the solutions to the following coupled differential Riccati equations
\begin{multline}
\dot{S_1} + S_1 A + A^\intercal S_1 + Q_1 - S_1 H_1 S_1 \\
 - S_1 H_T S_T - S_T H_T S_1 = 0,
\end{multline}
\begin{multline}
\dot{S_T} + S_T A + A^\intercal S_T + Q_T - S_T H_T S_T  \\*
- S_T H_1 S_1 - S_1 H_1 S_T = 0,
\end{multline}
where $H_T = B_T R_T^{-1} B_T^\intercal$ and with boundary conditions $S_1(t_f) = S_{1f}$ and $S_T(t_f) = S_{T_f}$.
Note that assigning different values of $\alpha_i$ will yield different teams with different strengths and weaknesses, and correspondingly different Nash solutions. 
Also, note that even though this game is solved as a 2-player LQD game, upon implementation the controls are reallocated to each player and the game remains as a multi-player game.

\section{Illustrative Example \label{sec:example}}
To illustrate the AAO framework, we consider a simple 3-against-1 pursuit evasion game on a planar surface.  The evader, player 1, is trying to escape from three pursuers (players 2, 3 and 4) by maximizing its weighted distance from them while the pursuers are trying to capture the evader by minimizing these distances.  We consider a capture radius of 0.1, meaning that if the evader is within a distance of 0.1 from any of the pursuers it is considered as captured.  The dynamics of the system follows (\ref{eqn:sys-gen}). The state vector $x \in \mathbb{R}^6$ is the combined three displacement vectors of dimension 2 each, which connects the evader E to pursuers P1, P2 and P3, respectively. To control the velocities of the players on the plane, the open-loop dynamic matrix is assumed to be $A=0$. The input matrix for the evader is $B_1 = \mathbf{1}_3 \otimes I_2$ and for the pursuers are $B_{i} = -  \mathbf{e}_{i-1} \otimes I_2$ for $i=2,3,4$. Also, the combined input matrix of the pursuers as a team can be written as $B_T = -I_3 \otimes I_2 $. 
We consider a final time $t_f=10$ (which is relatively long for the given system dynamics). Each player tries to minimize the corresponding cost function as stated in (\ref{eqn:cost-gen-p}). We assume that the evader's parameters in its cost function are as follows
\begin{equation}
	S_{1f} = -18 I_3 \otimes I_2, \quad 
	Q_1 = -6 I_3 \otimes I_2, \quad
	R_1 = I_2 \nonumber
\end{equation}
and the pursuers' parameters in their cost functions are as follows
\begin{equation}
S_{2f} = I_3 \otimes I_2,  \quad
Q_2 = 0.5 I_3 \otimes I_2, \quad
R_2 = 150 I_2, \nonumber
\end{equation}
\begin{equation}
S_{3f} = I_3 \otimes I_2,  \quad
Q_3 = 0.5 I_3 \otimes I_2, \quad
R_3 = 150 I_2, \nonumber
\end{equation}
\begin{equation}
S_{4f} = 16.25 I_3 \otimes I_2,  \;
Q_4 = 5.25 I_3 \otimes I_2, \;
R_4 = 150 I_2. \nonumber
\end{equation}

\begin{figure}[!hp]
	\begin{subfloat}[]{
			\resizebox{!}{0.29\textheight}{%
				\includegraphics[height=4in]{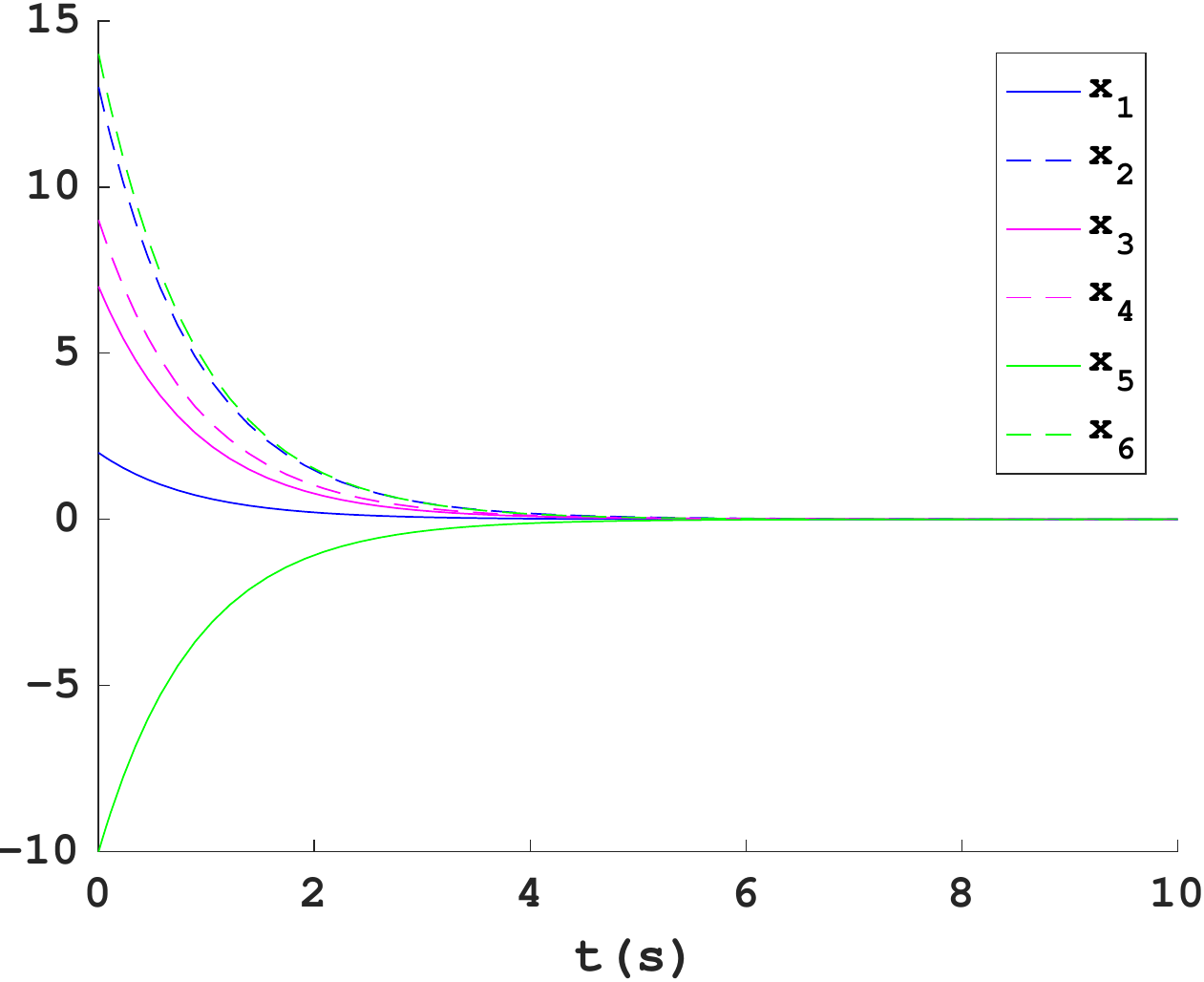}
				\label{fig:expnash}}}
	\end{subfloat}
	\begin{subfloat}[]{
			\resizebox{!}{0.29\textheight}{%
				\includegraphics[width=3.4in]{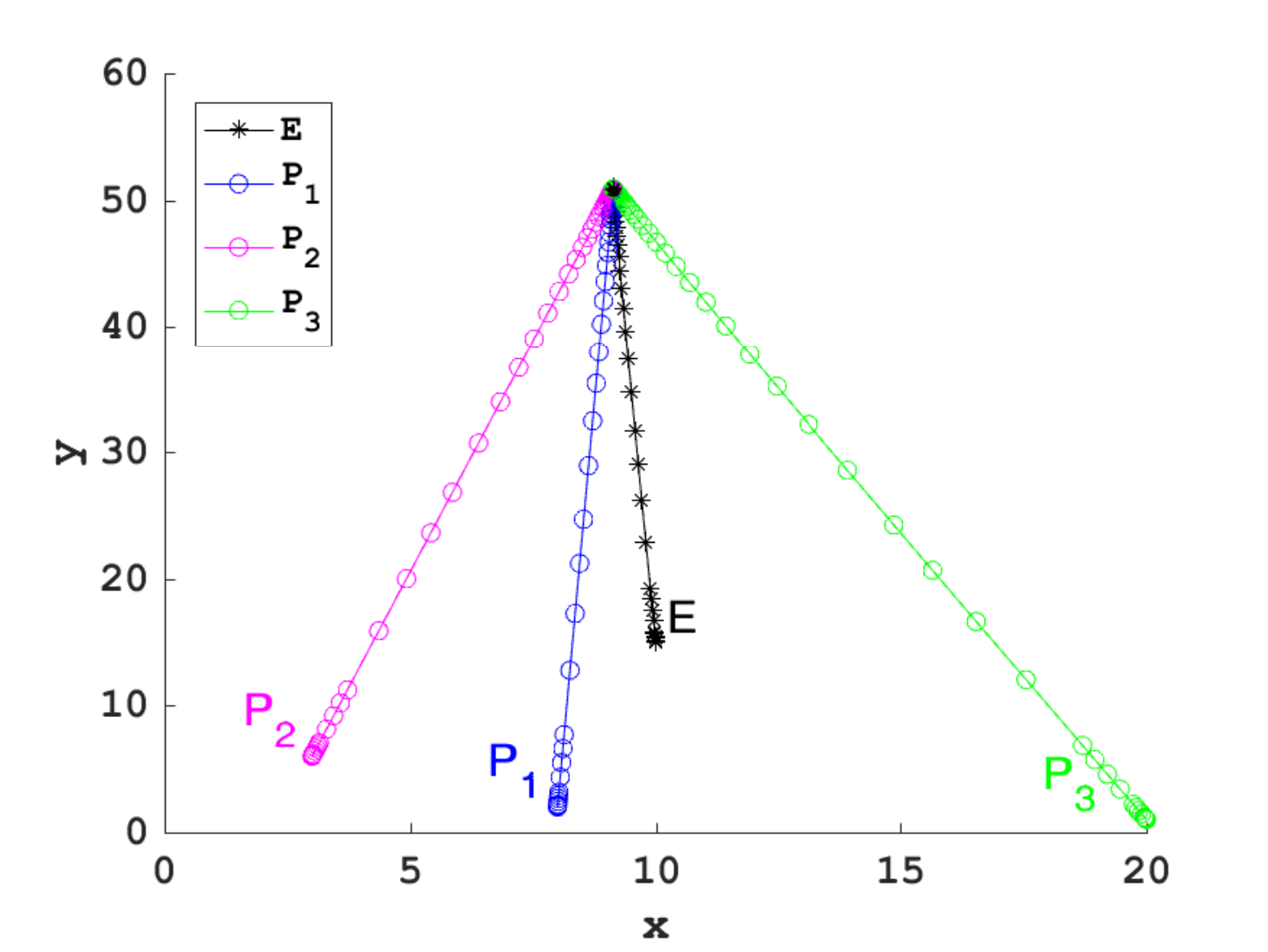}
				\label{fig:expnash2}}}
	\end{subfloat}
	\begin{subfloat}[]{
			\resizebox{!}{0.29\textheight}{%
				\includegraphics[width=3.4in]{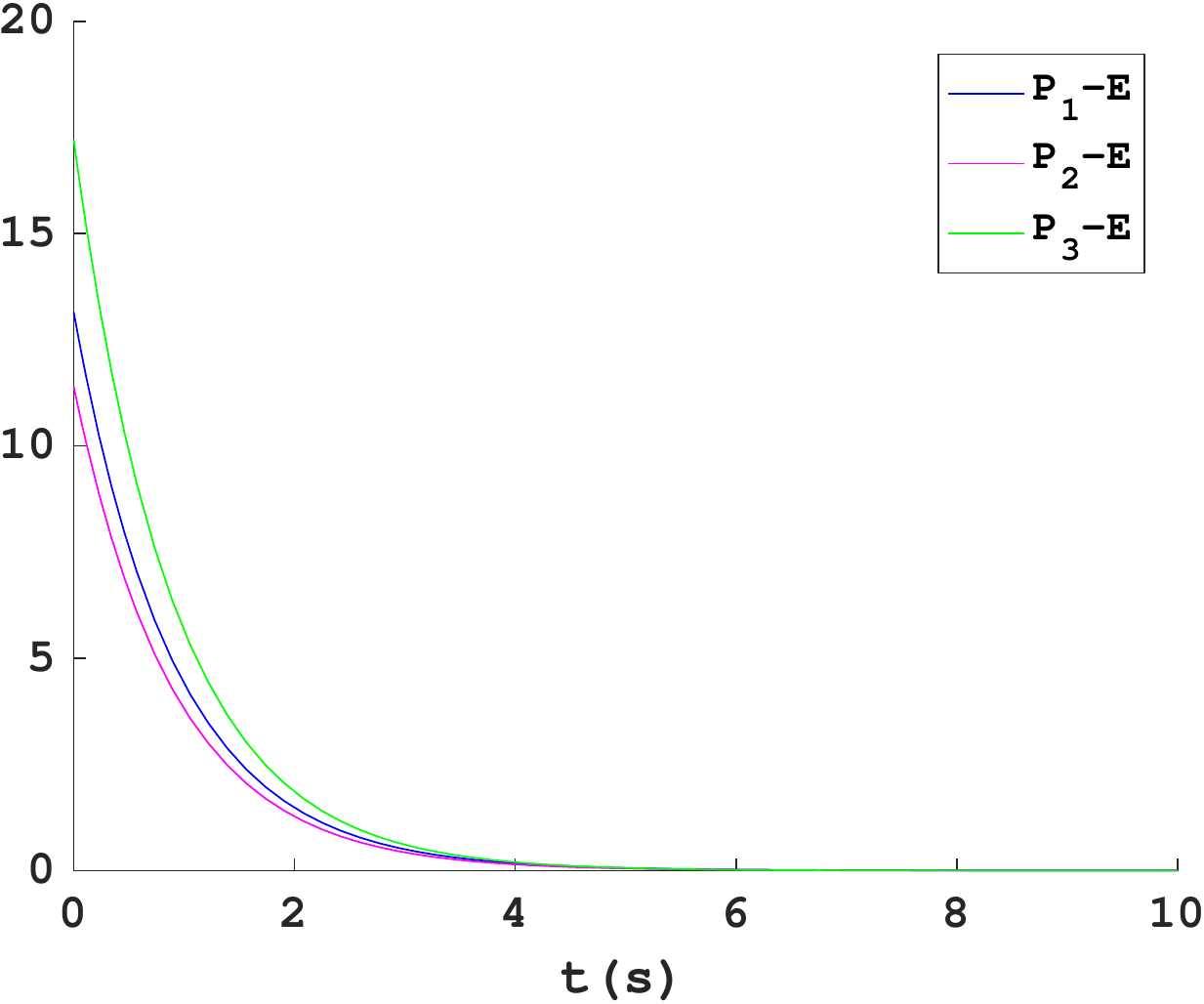}
				\label{fig:expnash3}}}
	\end{subfloat}	
	\caption{Example - Nash Strategies: \protect\subref{fig:expnash} state variables versus time, \protect\subref{fig:expnash2} x-y coordinates of players' trajectories and \protect\subref{fig:expnash3} distances between the pursuers P1, P2 and P3, and the evader E.}
	\label{fig:simNash}
\end{figure}
\begin{figure}[!hp]
	\begin{subfloat}[]{
			\resizebox{!}{0.29\textheight}{%
				\includegraphics[height=3.4in]{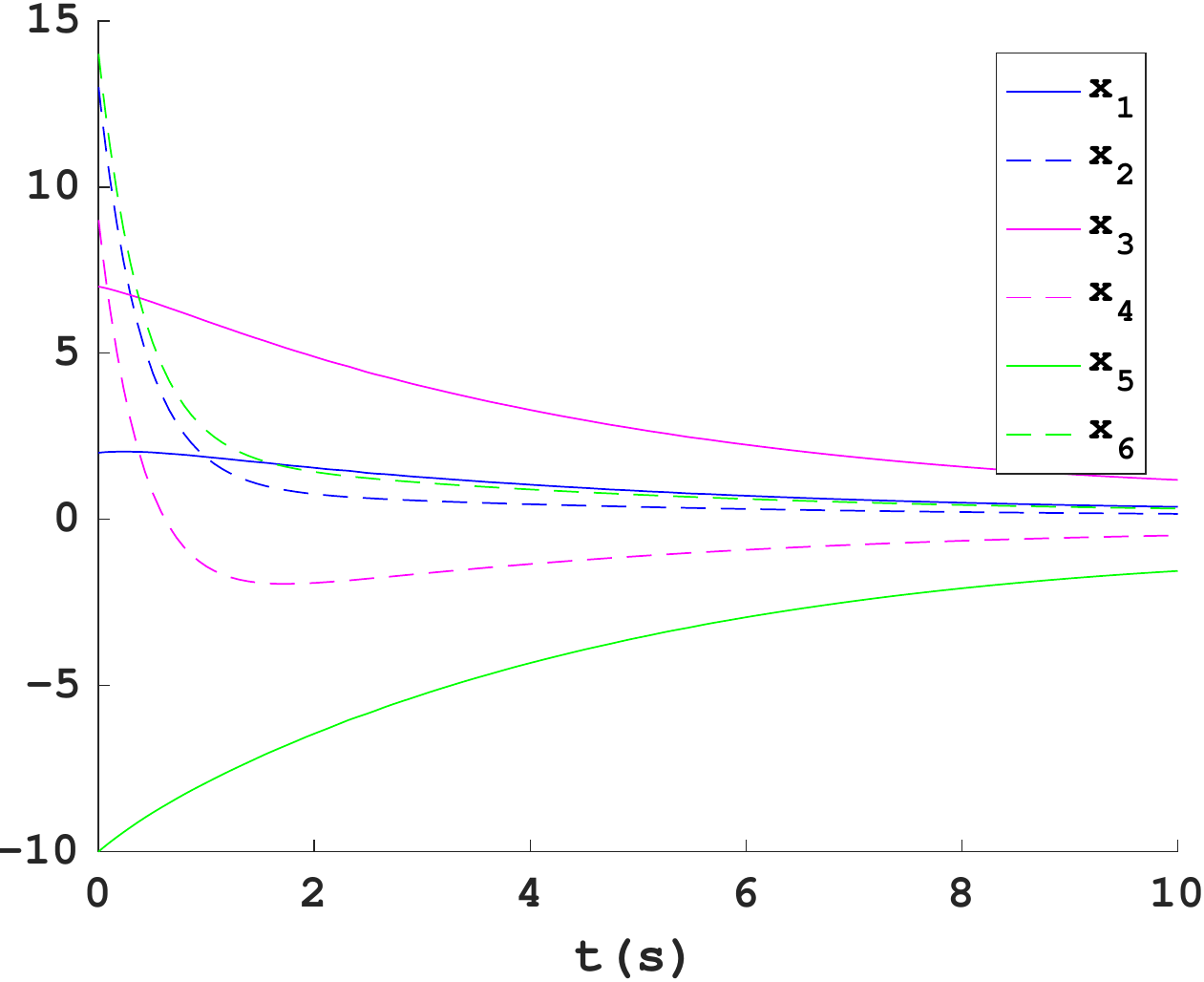}
				\label{fig:expteam}}}
	\end{subfloat}
	\begin{subfloat}[]{
			\resizebox{!}{0.29\textheight}{%
				\begin{tikzpicture}[spy using outlines={ellipse,yellow,magnification=2,size=3cm, height=1.5cm, connect spies}]
				\node {\includegraphics[interpolate=true,width=3.4in]{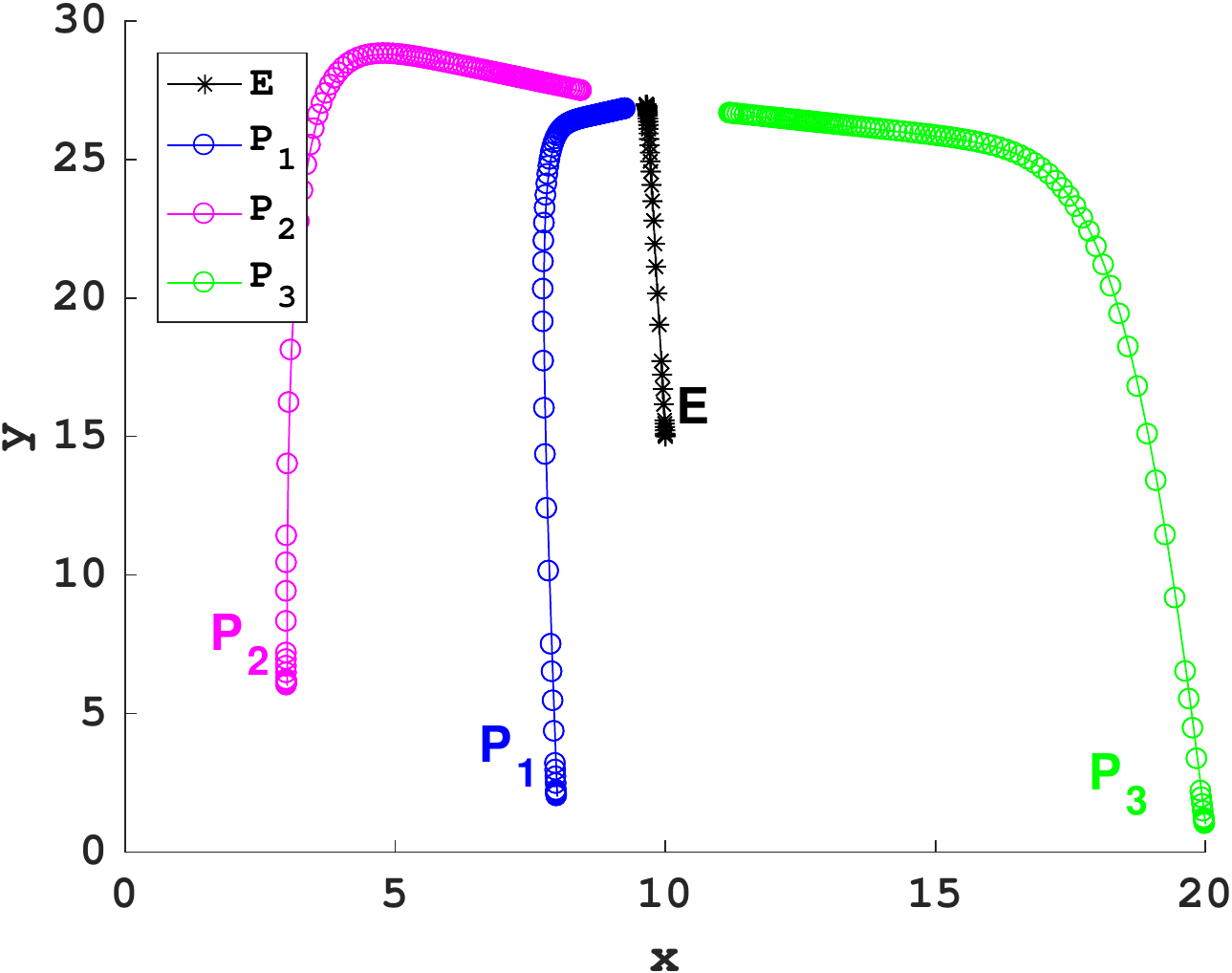}};
				\spy[black] on (0.2,2.7) in node [left] at (2.9,-0.6);
				\end{tikzpicture}
				\label{fig:expteam2}}}
	\end{subfloat}
	\begin{subfloat}[]{
			\resizebox{!}{0.29\textheight}{%
				\includegraphics[width=3.4in]{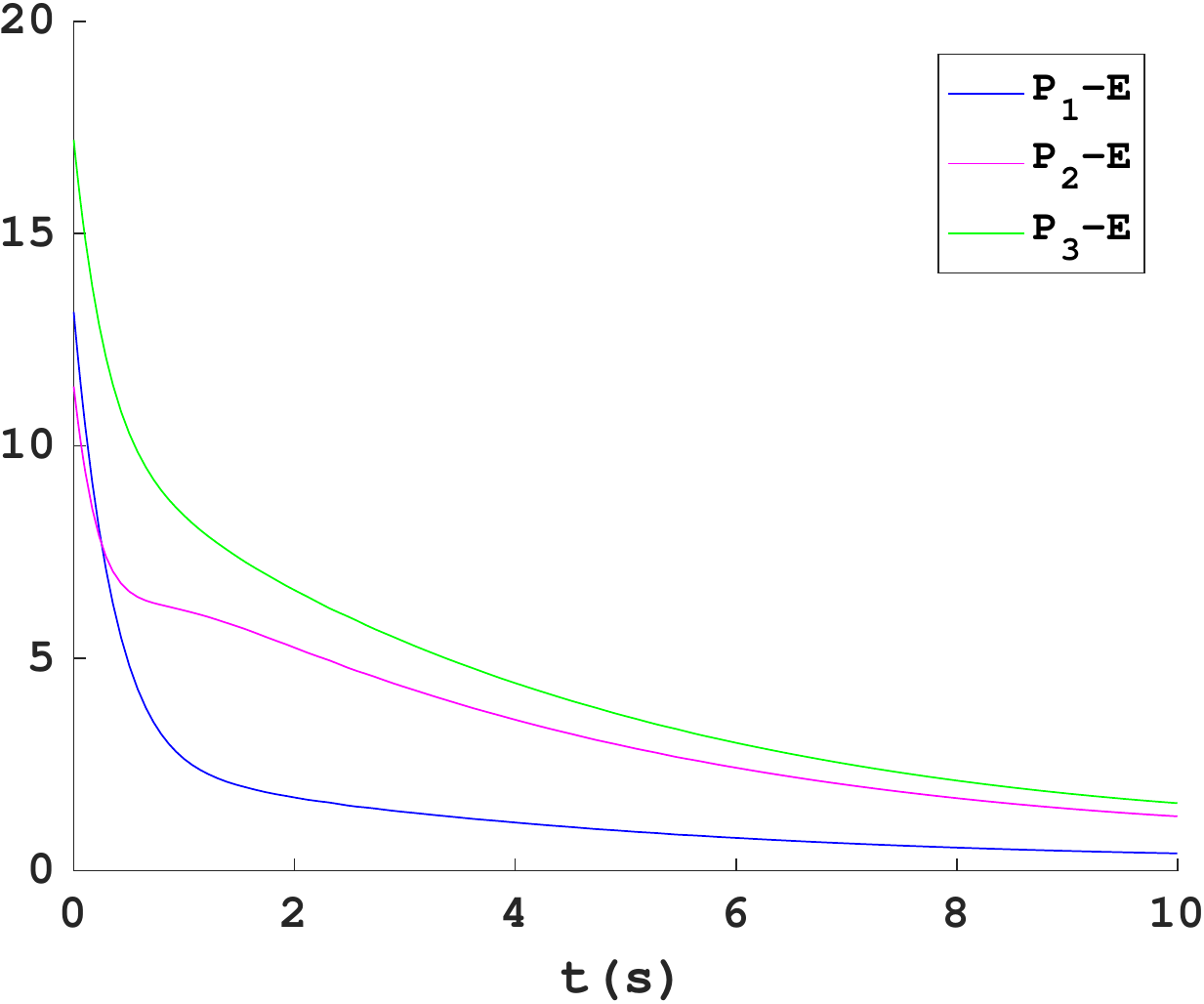}
				\label{fig:expteam3}}}
	\end{subfloat}
	\caption{Example - Team-Nash Strategies: \protect\subref{fig:expteam} state variables versus time, \protect\subref{fig:expteam2} x-y coordinates of players' trajectories and \protect\subref{fig:expteam3} distances between the pursuers P1, P2 and P3, and the evader E.}
	\label{fig:simNashteam}
\end{figure}

Additionally, for the Team-Nash strategies we arbitrarily chose $[\alpha_1 \; \alpha_2 \; \alpha_3] = [0.\overline{3} \; 0.\overline{3} \;  0.\overline{3}]$ representing equal contributions by all pursuers. The initial positions of the players are such that the initial state vector is $x_0 = \begin{bmatrix}
2 & 13 & 7 & 9 & -10 & 14
\end{bmatrix}^\intercal$.
The resulting state variables (which are the combined three displacement vectors that connect the three pursuers to the evader) for the Nash and Team-Nash strategies are plotted vs time in Figures \ref{fig:expnash} and \ref{fig:expteam} respectively. The x-y coordinates of the players' movement for both strategies are shown in Figures \ref{fig:expnash2} and \ref{fig:expteam2} respectively. Finally, the distances of pursuers to the evader for the Nash and Team-Nash strategies are plotted vs time in Figures \ref{fig:expnash3} and \ref{fig:expteam3} respectively.

For the Nash strategies in Figure \ref{fig:simNash}, the conditions of Theorem \ref{theo:capt} are satisfied since
\begin{equation}\label{eqn:cc1}
	 Q_1 + \sum_{j=2}^4 Q_j =  (-6 + 0.5 + 0.5 + 5.25) I_3 \otimes I_2  > 0 \nonumber
\end{equation}
\begin{equation}\label{eqn:cc2}
S_{1f} + \sum_{j=2}^4 S_{jf} = (-18 + 1 + 1 + 16.25) I_3 \otimes I_2 > 0  \nonumber
\end{equation}

Consequently, according to Theorem \ref{theo:capt} the exponential boundedness of the system is guaranteed regardless of the initial position of the players. For the given $t_f$ the pursuers are able to capture the evader before final time of the game.
For the Team-Nash strategies in Figure \ref{fig:simNashteam}, the conditions of Theorem \ref{theo:capt} are not satisfied since
\begin{equation}
Q_1 + Q_T =  (-6 + 2.08) I_3 \otimes I_2  < 0 \nonumber
\end{equation}
\begin{equation}
S_{1f} + S_{T_f} = (-18 + 6.08) I_3 \otimes I_2 < 0.  \nonumber 
\end{equation}
Consequently, the exponential boundedness of system is not guaranteed. In this case as can be seen in Figures \ref{fig:simNashteam}, the evader was able to keep the state vector of the system away from the origin and as a result escape. 
In terms of accomplishing their objectives, the pursuers appear to have done a better job in using the Nash strategies compared to the Team-Nash strategies. As mentioned earlier this may be due to the fact that in the Nash strategies the pursuers are completely free to act independently while in the Team-Nash they are constraint to act cooperatively within the team structure. 

\begin{table}[!ht]
	\caption{Pursuers Distances to the Evader for Different Final Time}
	\label{Table:sim8}
	\centering
	\normalsize
	\showboxdepth=\maxdimen
	\showboxbreadth=\maxdimen
	\begin{tabular}{c| c| c | *{7}{c|}} 
		\toprule
		\multicolumn{3}{c}{\scriptsize $t_f=$} & \scriptsize 2.00 &\scriptsize 4.00 &\scriptsize 6.00 &\scriptsize 8.00 &\scriptsize 10.00 &\scriptsize 12.00 &\scriptsize 14.00 \\ 
		\hline
		\\[-1em]
		\multirow{6}{*}{\begin{turn}{90} \scriptsize Pursuers' Distances to Evader at $t_f$ \end{turn}}&\multirow{4}{*}{\begin{turn}{90} \scriptsize Nash $\; \;$ \end{turn}}& \scriptsize {P1}& \scriptsize 8.25 &\scriptsize 1.42 &\scriptsize 0.25 &\scriptsize 0.03 &\scriptsize 0.00 &\scriptsize 0.00 &\scriptsize 0.00  \\  [0.2ex]
		\cmidrule{3-10}
		& &{\scriptsize {P2} }&\scriptsize 9.59 &\scriptsize 2.22 &\scriptsize 0.40 &\scriptsize 0.05 &\scriptsize 0.00 &\scriptsize 0.00 &\scriptsize 0.00 \\  [0.2ex]
		\cmidrule{3-10}
		& &{\scriptsize {P3} }&\scriptsize 7.19 &\scriptsize 1.29 &\scriptsize 0.16 &\scriptsize 0.02 &\scriptsize 0.00 &\scriptsize 0.00 &\scriptsize 0.00 \\  [0.2ex]
		\cmidrule{2-10}
		&\multirow{4}{*}{\begin{turn}{90} \scriptsize Team-Nash $\;$ \end{turn}}& \scriptsize {P1}&\scriptsize 1.90 &\scriptsize 1.34 &\scriptsize 0.91 &\scriptsize 0.61 &\scriptsize 0.41 &\scriptsize 0.27 &\scriptsize 0.18 \\ [0.2ex]
		\cmidrule{3-10}	
		& &{\scriptsize {P2} }&\scriptsize 5.94 &\scriptsize 4.18 &\scriptsize 2.85 &\scriptsize 1.92 &\scriptsize 1.28 &\scriptsize 0.85 &\scriptsize 0.57 \\  [0.2ex]
		\cmidrule{3-10}
		& &{\scriptsize {P3} }&\scriptsize 7.39 &\scriptsize 5.21 &\scriptsize 3.55 &\scriptsize 2.38 &\scriptsize 1.59 &\scriptsize 1.06 &\scriptsize 0.70 \\  [0.2ex]
		\cmidrule{3-10}
		\bottomrule%
	\end{tabular}%
\end{table}%

Table \ref{Table:sim8} shows the distances between the pursuers and the evader for different values of $t_f$ increasing from 2 to 14 and for the two strategy options discussed. As it is clear from the table for the Nash strategy, the evader could have been captured if the final time were $t_f=8$ instead of $t_f=10$ since its distance from all pursuers are within the capture radius 0.1 at $t_f=8$. On the other hand for the Team-Nash strategy, it is clear that even for a final time $t_f =14$ the pursuers are still further away from the evader and none of them is within the capture radius.
However, if $t_f$ were extended to $18$, then the simulation would show the distance between the pursuer (P1) and the evader at this final time is $0.08$ which is within the capture radius and thus the evader will be captured.

\section{Conclusion \label{sec:conclusion}}
The current literature on linear quadratic non-zero-sum differential games has emphasized a structure in which all players are constrained to have non-negative definite state weight matrices in their cost functions.  This structure does not consider games where one or more players have cost functions that do not satisfy this constraint.  In this paper, we considered games where one of the players has an cost function with negative definite state weight matrices making this player in direct conflict with the remaining players.  These types of games are very useful in analyzing conflict situations in which one player wants to maximize the deviation of the state from the origin (i.e. de-stabilize the system) while the others are trying to regulate the state in the standard sense.  We referred to these types of games as All-Against-One.  In this paper, we derived new conditions that guarantee existence, definiteness and uniqueness of the closed-loop Nash strategies as well as boundedness of the resulting state trajectory. We also considered the Nash and Team-Nash strategies as possible options for the group of the players.  As an illustrative example, we simulated a three-pursuer one-evader pursuit evasion game and determined both close-loop Nash and Team-Nash strategies for the players and resulting trajectories. We also applied our results to derive conditions for which the evader will be captured under each case. In the case of the Nash strategies, we also showed that as the final time increased the distances between the pursuers and the evader became smaller, making it more difficult for the evader to escape.

\bibliographystyle{IEEEtran}
\bibliography{Bibtex}

\begin{thebibliography}{10}
\providecommand{\url}[1]{#1}
\csname url@samestyle\endcsname
\providecommand{\newblock}{\relax}
\providecommand{\bibinfo}[2]{#2}
\providecommand{\BIBentrySTDinterwordspacing}{\spaceskip=0pt\relax}
\providecommand{\BIBentryALTinterwordstretchfactor}{4}
\providecommand{\BIBentryALTinterwordspacing}{\spaceskip=\fontdimen2\font plus
\BIBentryALTinterwordstretchfactor\fontdimen3\font minus
  \fontdimen4\font\relax}
\providecommand{\BIBforeignlanguage}[2]{{%
\expandafter\ifx\csname l@#1\endcsname\relax
\typeout{** WARNING: IEEEtran.bst: No hyphenation pattern has been}%
\typeout{** loaded for the language `#1'. Using the pattern for}%
\typeout{** the default language instead.}%
\else
\language=\csname l@#1\endcsname
\fi
#2}}
\providecommand{\BIBdecl}{\relax}
\BIBdecl

\bibitem{talebi2017cooperative}
S.~Talebi, M.~A. Simaan, and Z.~Qu, ``Cooperative, non-cooperative and greedy
  pursuers strategies in multi-player pursuit-evasion games,'' in \emph{Proc.
  of the IEEE Conference on CCTA}, Aug 27-30 2017, pp. 2049--2056.

\bibitem{isaacs1965differential}
R.~Isaacs, \emph{Differential Games}.\hskip 1em plus 0.5em minus 0.4em\relax
  John Wiley and Sons, 1965.

\bibitem{ho1965differential}
Y.~Ho, A.~Bryson, and S.~Baron, ``Differential games and optimal
  pursuit-evasion strategies,'' \emph{IEEE Trans. on AC}, vol.~10, pp.
  385--389, 1965.

\bibitem{starr1969nonzero}
A.~W. Starr and Y.-C. Ho, ``Nonzero-sum differential games,'' \emph{Journal of
  Optimization Theory and Applications}, vol.~3, pp. 184--206, 1969.

\bibitem{foley1974class}
M.~Foley and W.~Schmitendorf, ``A class of differential games with two pursuers
  versus one evader,'' \emph{IEEE Trans. on AC}, vol.~19, pp. 239--243, 1974.

\bibitem{basar1999dynamic}
T.~Basar and G.~J. Olsder, \emph{Dynamic Noncooperative Game Theory}.\hskip 1em
  plus 0.5em minus 0.4em\relax Society for Industrial and Applied Mathematics,
  1999.

\bibitem{engwerda2005lq}
J.~Engwerda, \emph{LQ dynamic optimization \& differential games}.\hskip 1em
  plus 0.5em minus 0.4em\relax John Wiley \& Sons, 2005.

\bibitem{lambertini2018differential}
L.~Lambertini, \emph{Differential Games in Industrial Economics}.\hskip 1em
  plus 0.5em minus 0.4em\relax Cambridge University Press, 2018.

\bibitem{nash1951non}
J.~Nash, ``Non-cooperative games,'' \emph{Annals of mathematics}, pp. 286--295,
  1951.

\bibitem{papavassilopoulos1979existence}
G.~Papavassilopoulos, J.~Medanic, and J.~Cruz, ``On the existence of {Nash}
  strategies and solutions to coupled {R}iccati equations in linear-quadratic
  games,'' \emph{Journal of Optimization Theory and Applications}, vol.~28,
  no.~1, pp. 49--76, 1979.

\bibitem{papavassilopoulos1979uniqueness}
G.~Papavassilopoulos and J.~Cruz~Jr, ``On the uniqueness of {Nash} strategies
  for a class of analytic differential games,'' \emph{Journal of Optimization
  Theory and Applications}, vol.~27, no.~2, pp. 309--314, 1979.

\bibitem{freiling1996global}
G.~Freiling, G.~Jank, and H.~Abou-Kandil, ``On global existence of solutions to
  coupled matrix {R}iccati equations in closed-loop {N}ash games,'' \emph{IEEE
  Transactions on Automatic Control}, vol.~41, no.~2, pp. 264--269, Feb 1996.

\bibitem{abou2012matrix}
H.~Abou-Kandil, G.~Freiling, V.~Ionescu, and G.~Jank, \emph{Matrix {R}iccati
  equations in control and systems theory}.\hskip 1em plus 0.5em minus
  0.4em\relax Birkh{\"a}user, 2003.

\bibitem{kato2013perturbation}
T.~Kato, \emph{Perturbation theory for linear operators}.\hskip 1em plus 0.5em
  minus 0.4em\relax Springer Science \& Business Media, 2013, vol. 132.

\bibitem{perko2013differential}
L.~Perko, \emph{Differential equations and dynamical systems}.\hskip 1em plus
  0.5em minus 0.4em\relax Springer Science \& Business Media, 2013, vol.~7.

\bibitem{arnold1992ordinary}
V.~I. Arnold, \emph{Ordinary Differential Equations}.\hskip 1em plus 0.5em
  minus 0.4em\relax Springer-Verlag Berlin Heidelberg, 1992.

\bibitem{liu2004noninferior}
Y.~Liu and M.~Simaan, ``Noninferior {N}ash strategies for multi-team systems,''
  \emph{Journal of Optimization Theory and Applications}, vol. 120, no.~1, pp.
  29--51, 2004.

\end{thebibliography}

\appendices

\vspace{0.5cm}
\section{Proof of Corollary \ref{coro:exist} \label{proof:coro:exist}}
First, we prove that $\mathcal{R}_Q$ maps $\mathbb{R}_-^{n \times n} \times (\prod_2^M \mathbb{R}_+^{n \times n})$ into $\mathbb{R}_+^{n \times n}$ or equivalently $\mathcal{R}_Q(W_1, ... , W_M)$ is positive semi-definite for every collection \{$W_i(t)$, $i=1,2,...,M$\} such that $W_1 \leq 0$ and $W_i \geq 0$ for $i=2,\dots,M$, and for any $Q \geq 0$.
Suppose that all matrices are diagonal, thus, they commute and we can simplify and bound $\mathcal{R}_Q(W_1, ... , W_M)$ from below as follows
\begin{align} 
\mathcal{R}_Q = & Q+ H_1 W_1^2 - \sum_{i=2}^M H_i W_i^2 - 2 \sum_{j=1}^M H_j W_1 W_j \nonumber \\
	& + 2 \sum_{i=2}^M \sum_{j=1}^M H_j W_i W_j \nonumber\\
   =& Q- H_1 W_1^2 - \sum_{i=2}^M H_i W_i^2  + 2 \sum_{j=2}^M (H_1 - H_j) W_1 W_j \nonumber \\
    & + 2 \sum_{i=2}^M \sum_{j=2}^M H_j W_i W_j \nonumber\\
   =& Q- H_1 W_1^2 + \sum_{i=2}^M H_i W_i^2  + 2 \sum_{j=2}^M (H_1 - H_j) W_1 W_j \nonumber \\
	& + 2 \sum_{\substack{i,j=2\\ i \neq j}}^M H_j W_i W_j \nonumber\\
\geq & - H_1 W_1^2 + \sum_{i=2}^M H_i W_i^2 \label{ineq:s1} + 2 \sum_{\substack{i,j=2\\ i \neq j}}^M H_j W_i W_j\\
\geq & H_1 \Big( - W_1^2 + \sum_{i=2}^M W_i^2 + \sum_{\substack{i,j=2\\ i \neq j}}^M 2 W_i W_j\Big) \label{ineq:s2}\\
   = & H_1 \Big( - W_1^2 + \Big(\sum_{i=2}^M W_i \Big)^2 \Big) \nonumber  \\
   = & H_1 \Big( - W_1 + \sum_{i=2}^M W_i \Big) \Big( W_1 + \sum_{i=2}^M W_i \Big)  \label{ineq:s4} 
\end{align}
First, Inequalities (\ref{ineq:s1}) and (\ref{ineq:s2}) are due to $H_1 \leq H_i$. Second, all given matrices are assumed to be diagonal which result in all $S_i$ (solutions of (\ref{eqn:Nriccati})) to be diagonal and has the same definiteness as $W_i$ by Theorem \ref{lemma:Sdef}.
Third, $H_1 \geq 0$, Theorem \ref{lemma:Sdef} and Lemma \ref{lemma:Ssum} together yield that $ S_1 + \sum_{i=2}^M S_i \geq 0$ and therefore by (\ref{ineq:s4}) it follows that $\mathcal{R}_Q(S_1,\dots,S_M) \geq 0$.
Finally, 
similar argument as in Theorem \ref{theo:totalexistence} results in existence of $S_i$ for all $t \in [t_0,t_f]$.

\begin{IEEEbiography}[{\includegraphics[width=1in,height=1.25in,clip,keepaspectratio]{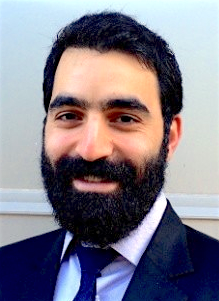}}]{Shahriar Talebi}
(S'17) received his B.Sc. degree in Electrical Engineering from Sharif University of Technology, Iran, in 2013, M.Sc. degree in Electrical Engineering from
University of Central Florida, Orlando, FL, in 2017, both in the area of control theory. He is currently working towards a PhD degree. His research interest includes Cooperative/Non-cooperative Game Theory, Networked Control, Optimal Control, Optimization and Inverse Problems.
\end{IEEEbiography}

\begin{IEEEbiography}[{\includegraphics[width=1in,height=1.25in,clip,keepaspectratio]{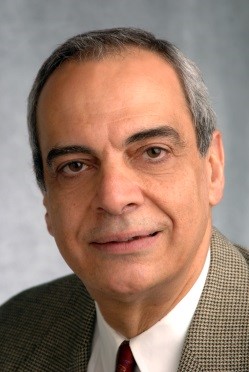}}]{Marwan A. Simaan}
(S'69-M'72-SM'79-F'88-LF'12) received the Ph.D. degree in electrical engineering from the University of Illinois at Urbana-Champaign in 1972. He is currently the Florida 21st Century Chair and Distinguished Professor of Electrical Engineering and Computer Science with the University of Central Florida. His research covers a broad spectrum of topics in game theory, control, optimization, and signal processing. He has authored or co-authored 5 books (one co-authored and 4 edited/co-edited), 140 archival journal papers and book chapters, 235 papers in conference proceedings, and 24 industry technical reports. He is a member of the U.S. National Academy of Engineering. He is a fellow of the American Society for Engineering Education, the American Association for the Advancement of Science, the American Institute for Medical and Biological Engineering, and the National Academy of Inventors. He has served on numerous professional committees and editorial boards, including the AACC Awards Committee, the IEEE Fellow Committee, the IEEE Education Medal Committee, the IEEE Proceedings and IEEE Access Editorial Boards, the AAAS Engineering Section Steering Group, and Others. In 1995, he was named a Distinguished Alumnus of the Department of Electrical and Computer Engineering at the University of Illinois at Urbana-Champaign. In 2008, he received the University of Illinois, College of Engineering Award for Distinguished Service in Engineering. He is a registered Professional Engineer with the Commonwealth of Pennsylvania.
\end{IEEEbiography}

\begin{IEEEbiography}[{\includegraphics[width=1in,height=1.25in,clip,keepaspectratio]{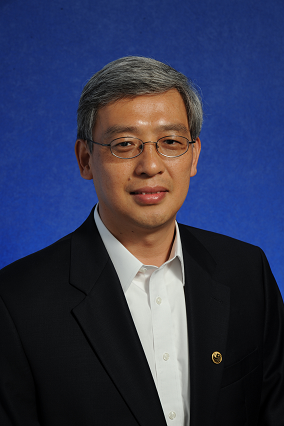}}]{Zhihua Qu}
(M'90-SM'93-F'09) received the Ph.D. degree in electrical engineering from the Georgia Institute of Technology, Atlanta, in June 1990. Since then, he has been with the University of Central Florida (UCF), Orlando. Currently, he is the SAIC Endowed Professor in College of Engineering and Computer Science, a Pegasus Professor and the Chair of Electrical and Computer Engineering, and the Director of FEEDER Center. His areas of expertise are nonlinear systems and control, with applications to autonomous systems and energy/power systems. His recent work focuses upon cooperative control, distributed optimization, and plug-and-play control of networked systems. 
\end{IEEEbiography}

\end{document}